\theoremstyle{plain}
 \newtheorem*{mainresult}{Main Result}
\newtheorem{theorem}{Theorem}[section]
\newtheorem{lemma}[theorem]{Lemma}
\newtheorem{corollary}[theorem]{Corollary}
\newtheorem{problem}[theorem]{Problem}
\newtheorem{assumption}[theorem]{Assumption}
\theoremstyle{definition}
\newtheorem{remark}[theorem]{Remark}
\numberwithin{equation}{section}
\newcommand{\linspan}{\mathop{\rm span}\nolimits}
\newcommand{\rest}{\left.\kern-2\nulldelimiterspace\right|_}
\newcommand{\norm}[2]{\left|#1\right|_{#2}}
\newcommand{\dnorm}[2]{\left\|#1\right\|_{#2}}
\newcommand{\Id}{{\mathbf1}}
\newcommand{\ex}{\mathrm{e}}
\newcommand{\p}{\partial}
\newcommand{\ed}{\mathrm d}
\newcommand{\deltafun}{\bm\updelta}
\newcommand*{\Bigcdot}{\raisebox{-.25ex}{\scalebox{1.25}{$\cdot$}}}
\newcommand{\clA}{{\mathcal A}}
\newcommand{\clD}{{\mathcal D}}
\newcommand{\clE}{{\mathcal E}}
\newcommand{\clJ}{{\mathcal J}}
\newcommand{\clL}{{\mathcal L}}
\newcommand{\clM}{{\mathcal M}}
\newcommand{\clT}{{\mathcal T}}
\newcommand{\clU}{{\mathcal U}}
\newcommand{\clX}{{\mathcal X}}
\newcommand{\clY}{{\mathcal Y}}
\newcommand{\bbN}{{\mathbb N}}
\newcommand{\bbR}{{\mathbb R}}
\newcommand{\bfB}{{\mathbf B}}
\newcommand{\bfC}{{\mathbf C}}
\newcommand{\bfD}{{\mathbf D}}
\newcommand{\bfK}{{\mathbf K}}
\newcommand{\bfL}{{\mathbf L}}
\newcommand{\bfM}{{\mathbf M}}
\newcommand{\bfP}{{\mathbf P}}
\newcommand{\bfR}{{\mathbf R}}
\newcommand{\bfS}{{\mathbf S}}
\newcommand{\bfT}{{\mathbf T}}
\newcommand{\fkB}{{\mathfrak B}}
\newcommand{\fkC}{{\mathfrak C}}
\newcommand{\rmD}{{\mathrm D}}
\newcommand{\bfc}{{\mathbf c}}
\newcommand{\bff}{{\mathbf f}}
\newcommand{\bfh}{{\mathbf h}}
\newcommand{\bfn}{{\mathbf n}}
\newcommand{\bfu}{{\mathbf u}}
\newcommand{\bfx}{{\mathbf x}}
\newcommand{\rmd}{{\mathrm d}}
\newcommand{\rme}{{\mathrm e}}
\newcommand{\rmf}{{\mathrm f}}
\newcommand{\fkd}{{\mathfrak d}}
\newcommand{\ovlineC}[1]{\overline C_{\left[#1\right]}}
\definecolor{DarkBlue}{rgb}{0,0.08,0.45}
\definecolor{DarkRed}{rgb}{.65,0,0}
\definecolor{applegreen}{rgb}{0.55, 0.71, 0.0}
\newcounter{mymac@matlab}
\newcommand{\matlab}{MATLAB%
   \ifnum\value{mymac@matlab}<1%
   \textregistered%
   \setcounter{mymac@matlab}{1}%
   \fi%
  }
\newcommand{\black}{ \color{black} }
\begin{document}
\title{Stabilizability for nonautonomous linear parabolic equations with actuators as distributions}
\author{Karl Kunisch$^{\tt1,2}$,
\quad S\'ergio S.~Rodrigues$^{\tt1}$,
\quad Daniel Walter$^{\tt3}$}
\thanks{
\vspace{-1em}\newline\noindent
{\sc MSC2020}:  93D15, 93B52, 93C20, 35K58.
\newline\noindent
{\sc Keywords}: stabilizing feedback controls, delta distributions as actuators,
parabolic equations, finite-dimensional control.
\newline\noindent
$^{\tt1}$ Johann Radon Institute for Comput. and Appl. Math.,
  \"OAW, Altenbergerstr. 69, 4040 Linz, Austria.
\newline\noindent
$^{\tt2}$ Institute for Mathematics and Scientific Computing, 621, Heinrichstrasse 36, 8010 Graz, Austria.
\newline\indent
$^{\tt3}$ Institute for  Mathematics, Humboldt University, Rudower Chaussee 25, 10117 Berlin,
Germany.
\newline\noindent
{\sc Emails}:
{\small\tt karl.kunisch@uni-graz.at},\quad
{\small\tt sergio.rodrigues@ricam.oeaw.ac.at,}\newline\hspace*{3.8em}{\small\tt daniel.walter@hu-berlin.de\,}.
}

\begin{abstract}
The  stabilizability  of a general class of abstract parabolic-like equations is
investigated, with a finite number of actuators. This class includes the case of actuators given as delta distributions located at given points in the spatial domain of concrete parabolic equations.
A stabilizing feedback control operator is constructed and given in explicit form. Then,  an associated optimal control is considered and the corresponding Riccati feedback is investigated. Results of  simulations are presented showing the stabilizing performance of both explicit and Riccati feedbacks.
\end{abstract}

\maketitle

\pagestyle{myheadings} \thispagestyle{plain} \markboth{\sc K. Kunisch,  S. S. Rodrigues, D. Walter}
{\sc Stabilization of  linear parabolic equations with delta distributions}

\section{Introduction}

We consider a general class of abstract parabolic-like equations, for time~$t>0$,
as
\begin{subequations}\label{sys-yfeed-intro}
\begin{equation}\label{sys-y-intro}
\dot y +A y+A_{\rm rc}y  =\textstyle\sum\limits_{j=1}^{M_\sigma}u_j\fkd_j,\qquad y(0)=y_0,
\end{equation}
where~$A$ is a diffusion-like operator, $A_{\rm rc}=A_{\rm rc}(t)$ is a time-dependent reaction--convection-like operator, and ~$u_j=u_j(t)$ are the coordinates of the control input~$u$
used to tune the actuators~$\fkd_j$, which are assumed to be elements in the continuous dual~$\rmD(A)'$ of the domain~$\rmD(A)$ of the (unbounded) operator~$A\colon H\to H$ in a pivot Hilbert space~$H$.
Under suitable further assumptions on these operators and actuators
we shall show that the explicitly given feedback input
\begin{equation}\label{feed-intro}
u_j(t)\coloneqq-\lambda\langle\fkd_{j},A^{-1}y\rangle_{\rmD(A)',\rmD(A)}
\end{equation}
\end{subequations}
is able to stabilize the system in the norm of a suitable space~$V'\supset H$. Without entering into more details at this point the main result reads as follows.
\begin{mainresult}%
 Let~$\mu>0$ be given. We can find a number~$M_\sigma$   of actuators, and a constant~$\lambda>0$ both large enough, so that
the solution of~\eqref{sys-yfeed-intro} satisfies
\begin{equation}\notag
\norm{y(t)}{V'}\le\ex^{-\mu (t-s)}\norm{y(s)}{V'},\quad\mbox{for all   }t\ge s\ge0\mbox{ and all }y_0\in V'.
\end{equation}
In particular, $t\mapsto \norm{y(t)}{V'}$ is strictly decreasing at time~$t=s$, if~$\norm{y(s)}{V'}\ne0$. \end{mainresult}
The details will be given in a more precise formulation in Theorem~\ref{T:main}.

\subsection{Example of application}
We shall show that we can apply  the abstract result to concrete parabolic equations as
\begin{subequations}\label{sys-y-parab-intro-expl}
\begin{align}
 &\tfrac{\p}{\p t} y -\nu\Delta y+ay +b\cdot\nabla y
 = -\lambda\textstyle\sum\limits_{j=1}^{M_\sigma}\langle\deltafun_{x^j},A^{-1}y(\Bigcdot,t)\rangle_{\rmD(A)',\rmD(A)}\deltafun_{x^j},\\
  &\fkB y\rest{\p\Omega}=0,\quad
y(0)=y_0,\quad\mbox{with}\quad A\coloneqq -\nu\Delta+\Id,
       \end{align}
 \end{subequations}
evolving in rectangular spatial domains~$\Omega\subset\bbR^d$ with~$d\in\{1,2,3\}$, under Dirichlet or Neumann boundary conditions, where~$\Id$ stands for the identity operator.
Denoting by~$(x,t)$ a generic point in the cylinder~$\Omega\times(0,+\infty)$, the reaction~$a=a(x,t)\in\bbR$ and the convection~$b=b(x,t)\in\bbR^d$ coefficients are known  functions. In general (e.g., due to the reaction term), the norm of the state corresponding to the free dynamics (i.e., to $u=0$) can diverge to~$+\infty$ as time increases.

We shall show that system~\eqref{sys-y-parab-intro-expl} is
stable for appropriately chosen locations~$x^j\in\Omega$ of the actuators, given by delta distributions~$\deltafun_{x^j}$ supported at the points~$x^j$.
In this concrete example, we focus on the cases~$d\in\{1,2,3\}$, which are the most relevant for several real world applications. The stability result, as stated in Main Result, will hold in the norm of the continuous dual~$V'$ of the Sobolev space~$V= W^{1,2}(\Omega)\subset L^2(\Omega)$. The details shall be given in Section~\ref{S:parab}.

\begin{enumerate}[leftmargin=1em,itemindent=1em,label={\bf\arabic*)}]
\item[] \hspace{-1em}We end this section with some remarks.
\item[--] For the moment, we may think of~$M_\sigma=M$. Later on, $M_\sigma=\sigma(M)$ will be a
 more general strictly increasing sequence of positive integers, which will be convenient, in concrete applications, to achieve suitable auxiliary properties for(a sequence of) families of actuators.
\item[--] For simplicity, we consider the case of rectangular domains. The extension to more general convex polygonal domains shall be addressed in Remark~\ref{R:ext-polygonal}.
\item[--] We shall check the satisfiability of the abstract assumptions, provided later on, for concrete actuators as above given as delta  distributions supported at single points. We can consider more general concrete actuators in~$\rmD(A)'$ (e.g., distributions supported in a curve or surface); to conclude the desired stability of the system, we will ``only'' need to check the satisfiability of the same assumptions by the new concrete actuators.
\end{enumerate}

\subsection{Related work} Here we focus on literature involving actuators which are not elements of the pivot space. Stabilization problems with finitely many actuators in the pivot space  are considered in~\cite{Azmi22,KunRodWal21,BarbuTri04,Barbu03,AzouaniTiti14}, for example.

Apparently the problem of   stabilization of nonautonomous parabolic equations with actuators given by a finite number of fixed delta distributions~$\deltafun_{x^j}$, $1\le j\le M_\sigma$ has not been investigated in the literature before. However, there are results on the
control  of parabolic equations in the one-dimensional case ~$\Omega=(0,l)\subset\bbR$, $l>0$,  with a mobile $\delta$ distribution~$\deltafun_{\overline x(t)}$ located at the time dependent point~$\overline x(t)\in\Omega$.  For example, see~\cite{Khapalov01}. Note that, in the 1D case we have that~$\deltafun_{\overline x(t)}\in W^{-1,2}(\Omega)$, at each time instant~$t$, where~$W^{-1,2}(\Omega)$ is  the continuous dual space of the Sobolev subspace~$W_0^{1,2}(\Omega)$, which means that we  have enough spatial regularity in order to have existence of standard weak solutions~$y$ for~\eqref{sys-y-parab-intro-expl} (cf.~\cite[Appendix]{Khapalov01}). In this manuscript we consider the case of static actuators and, further, also the cases~$\Omega\subset\bbR^d$, $d\in\{2,3\}$, where~$\deltafun_{x^j}\notin W^{-1,2}(\Omega)$; this means that, for actuators as~$\deltafun_{x^j}$ we will have to work with less regular solutions, with respect to the spatial variable.
 The case of delta distributions as actuators  has also been considered in~\cite{CastroZuazua05} where approximate controllability results have been derived. Note that approximate controllability to zero at a given finite time-horizon~$T>0$ does not imply stabilizability.
In the context of the wave equation, for  domains~$\Omega\subset\bbR^2$,  moving actuators ~$\deltafun_{\bfx(t)}$ acting as a weak damping,
 with support is not a single point, but a closed simple curve~$\bfx(t)\subset\Omega$,  were considered in~\cite{JaffardTucsnakZuazua98}. These distributions are more regular  than point supported ones.  Indeed,~$\deltafun_{\bfx(t)}\in W^{-1,2}(\Omega)$,
$\langle\deltafun_{\bfx(t)},z\rangle\coloneqq\int_{\bfx(t)}z\,\rmd\bfx(t)$ for~$z\in W^{1,2}(\Omega)$.

For~$d\in\{1,2,3\}$, the point delta distributions~$\deltafun_{x^j}\in \rmD(A)'\subset W^{-2,2}(\Omega)$ belong to the continuous dual~$\rmD(A)'$ of the domain~$\rmD(A)$ of the operator~$A$ as in~\eqref {sys-y-parab-intro-expl} (for both Dirichlet and Neumann boundary conditions). The dynamics in~\eqref{sys-y-intro} can be written as~$\dot y=\clA y+Bu$, with~$\clA =-A-A_{\rm rc}$ and a continuous linear control operator~$B\colon\bbR^{M_\sigma}\to\rmD(A)'$.  Though, we are considering the (more general) nonautonomous case~$\clA=\clA(t)$, we would like to refer to~\cite{BadraTakahashi14} where a stabilizability criterion is presented for the autonomous case with~$B\colon\bbR^{M_\sigma}\to\rmD(\clA^*)'$, where~$\clA^*$ stands for the adjoint of~$\clA$; see also~\cite{Raymond19}.

\subsection{Contents and notation}
The remaining text of the manuscript is organized as follows. In  Section~\ref{S:assumptions} we introduce the general assumptions defining a class of abstract parabolic-like systems under which we prove the stabilizability result. This proof is presented in Section~\ref{S:stab}. Then, we consider a related infinite time-horizon optimal control problem in Section~\ref{S:optimal_loc} and investigate associated differential Riccati equations. The applicability of the abstract result to concrete scalar parabolic equations is shown in Section~\ref{S:parab}. Section~\ref{S:discRicc} includes some comments on the discretization used to compute the solutions of the Riccati equations. The stabilizing performances of the explicit and  the Riccati  feedbacks are shown and discussed in Section~\ref{S:numRes} based on results of numerical simulations. We end the manuscript with concluding remarks in Section~\ref{S:finalremks}.

\bigskip
Concerning notation, we
write~$\bbR$ and~$\bbN$ for the sets of real numbers and nonnegative
integers, respectively, and we set $\bbR_+\coloneqq(0,\infty)$,
and $\bbN_+\coloneqq\mathbb N\setminus\{0\}$.

Given Banach spaces~$X$ and~$Y$, we write $X\xhookrightarrow{} Y$ if
$X\subseteq Y$ is a continuous inclusion. We write
$X\xhookrightarrow{\rm d} Y$, and $X\xhookrightarrow{\rm c} Y$,
if the inclusion is also dense, respectively compact.

We define the Bochner subspace~$W(I,X,Y)\coloneqq\{f\in L^2(I,X)\mid \dot f\in L^2(I,Y)\}$, for a given nonempty open interval~$I\subset\bbR_+$.

The space of continuous linear mappings from~$X$ into~$Y$ is denoted by~$\clL(X,Y)$. In case~$X=Y$ we
write~$\clL(X)\coloneqq\clL(X,X)$.
The continuous dual of~$X$ is denoted~$X'\coloneqq\clL(X,\bbR)$.
The adjoint of an operator $L\in\clL(X,Y)$ will be denoted $L^*\in\clL(Y',X')$.

By $C,\,C_i$, $i=0,\,1,\,\dots$, we shall denote several positive nonessential constants (i.e., which may differ at different places in the manuscript). To underline the dependence of a constant (e.g.,  an upper bound) on some parameters, we use
$\overline C_{\left[a_1,\dots,a_n\right]}$ denoting a nonnegative function that
increases in each of its nonnegative arguments~$a_i$, $1\le i\le n$.

\section{Assumptions}\label{S:assumptions}
We introduce the general class of evolutionary parabolic-like equations as
\begin{align}\label{sys-y}
\dot y +A y+A_{\rm rc}y  =-\lambda\textstyle\sum\limits_{j=1}^{M_\sigma}\langle\fkd_{j},A^{-1}y\rangle\fkd_j,\qquad y(0)=y_0,
\end{align}
where the
operators~$A$, $A_{\rm rc}=A_{\rm rc}(t)$ appearing in the system dynamics, and
the actuators~$\fkd_j$ are asked to satisfy appropriate assumptions.

First of all we are given two Hilbert spaces~$V\subset H=H'$. The first two assumptions require~$A$ to be a  (time-independent) diffusion-like operator.
\begin{assumption}\label{A:A0sp}
 $A\in\clL(V,V')$ is symmetric, and such that $(y,z)\mapsto\langle Ay,z\rangle_{V',V}$ is a
 complete scalar product on~$V.$
\end{assumption}

Hereafter, we suppose that~$V$ is endowed with the scalar product~$(y,z)_V\coloneqq\langle Ay,z\rangle_{V',V}$,
which again makes~$V$ a Hilbert space.
Necessarily, $A\colon V\to V'$ is an isometry.
\begin{assumption}\label{A:A0cdc}
The inclusion $V\subseteq H$ is dense, continuous, and compact.
\end{assumption}

Necessarily, the operator $A$ is densely defined in~$H$, with domain $\rmD(A)$ satisfying
\begin{equation}\notag
\rmD(A)\xhookrightarrow{\rm d,\,c} V\xhookrightarrow{\rm d,\,c} H\xhookrightarrow{\rm d,\,c} V'\xhookrightarrow{\rm d,\,c}\rmD(A)'.
\end{equation}
Further,~$A$ has a compact inverse~$A^{-1}\colon H\to H$, and we can find a nondecreasing
sequence of  eigenvalues $(\alpha_n)_{n\in\bbN_+}$ (repeated accordingly to their multiplicity) and a corresponding basis of
eigenfunctions $(e_n)_{n\in\bbN_+}$:
\begin{equation}\label{eigfeigv}
0<\alpha_1\le\alpha_2\le\dots\le\alpha_n\to+\infty, \quad Ae_n=\alpha_n e_n.
\end{equation}

Observe that we have the relations
 \begin{align}\notag
&\langle y,z\rangle_{X',X}=(y,z)_H,&&\quad\mbox{for all}\quad(y,z)\in H\times X,\qquad X\in\{V,\rmD(A)\};\\
  &\langle y,z\rangle_{\rmD(A)',\rmD(A)}=\langle y,z\rangle_{V',V},&&\quad\mbox{for all}\quad(y,z)\in V'\times \rmD(A).\notag
\end{align}
Hence, without ambiguity, we may omit the subscripts and will often write, for simplicity,
\begin{align}\notag
  &\langle y,z\rangle\coloneqq\langle y,z\rangle_{X',X},\quad\mbox{for } (y,z)\in X'\times X,\qquad X\in\{V,\rmD(A)\}.
\end{align}

The next assumption requires~$A_{\rm rc}=A_{\rm rc}(t)$ to be a time-dependent reaction--convection-like operator.
\begin{assumption}\label{A:A1}
For almost every~$t>0$ we have~$A_{\rm rc}(t)\in\clL(H, V')$,
and we have a uniform bound as $\norm{A_{\rm rc}}{L^\infty(\bbR_0,\clL(H,V'))}\eqqcolon C_{\rm rc}<+\infty.$
\end{assumption}

Finally, we make an assumption on the sequence of the families of actuators.

\begin{assumption}\label{A:poincare}
The sequence~$(U_M)_{M\in\bbN_+}$ of families~${U_{M}}\coloneqq\{\fkd^{M,j}\mid 1\le j\le M_\sigma\}$ of actuators satisfy the following.
 \begin{enumerate}[leftmargin=1em,itemindent=1em,label={\bf\arabic*)}]
\item
$M_\sigma\coloneqq\sigma(M)$, where~$\sigma\colon\bbN_+\to\bbN_+$
is a strictly increasing function;
\item
${U_{M}}\subset \rmD(A)'$ and~$\clU_{M}\coloneqq\linspan U_M$ has dimension~$\dim \clU_M=M_\sigma$,
\item\label{assu.actaux-seq} there exists a sequence~$(\widetilde U_M)_{M\in\bbN_+}$ of families~$\widetilde U_{M}\coloneqq\{\Psi^{M,j}\mid 1\le j\le M_\sigma\}$ of auxiliary functions, satisfying
 \begin{enumerate}[leftmargin=1em,itemindent=1em,label={\bf\alph*)}]
\renewcommand{\theenumii}{\ref{assu.actaux-seq}{\bf\alph{enumii})}}
\item
${\widetilde U_{M}}\subset \rmD(A)$ and~$\widetilde \clU_{M}\coloneqq\linspan\widetilde  U_M$ has dimension~$\dim \widetilde \clU_M=M_\sigma$,
\item\label{actaux_ij}
$\langle\fkd^{M,j},\Psi^{M,i}\rangle=\begin{cases}1&\mbox{ if }j=i,\\ 0&\mbox{ if }j\ne i,\end{cases}$
\item\label{actaux_DM}
with~$\rmD_M\coloneqq\{z\in\rmD(A)\mid \langle\fkd^{M,j},z\rangle=0\mbox{ for all }1\le j\le M_\sigma\}$, the   constant
\begin{align}\label{Poinc_const}
\xi_{M_+}\coloneqq\inf_{\varTheta\in\rmD_M\setminus\{0\}}
\tfrac{\norm{\varTheta}{\rmD(A)}^2}{\norm{\varTheta}{V}^2},
\end{align}
satisfies
$
\lim\limits_{M\to+\infty}\xi_{M_+}=+\infty.
$
\end{enumerate}
\end{enumerate}
\end{assumption}

\begin{remark}
The satisfiability of Assumptions~\ref{A:A0sp}--\ref{A:poincare} for scalar parabolic
systems as~\eqref{sys-y-parab-intro-expl} is addressed in Section~\ref{S:parab}.
\end{remark}

\section{The main stabilizability result}\label{S:stab}
The following Theorem~\ref{T:main} is a precise statement of Main Result stated in the Introduction. This section is mainly dedicated to its proof.

We start by introducing the mapping
\begin{equation}\label{Psi-map}
\Psi^\diamond\colon\bbR^{M_\sigma}\to\widetilde\clU_M,\qquad \Psi^\diamond v\coloneqq{\textstyle\sum\limits_{j=1}^{M_\sigma}}v_j\Psi^{M,j},
\end{equation}
where~$\widetilde\clU_M=\linspan\widetilde U_M$ is the linear span of the auxiliary functions as in Assumption~\ref{A:poincare}.
Since the family~$\widetilde U_M$ is linearly independent, we have that~$\norm{\Bigcdot}{\bbR^{M_\sigma}}$, $\norm{\Psi^\diamond \Bigcdot}{V}$, and~$\norm{\Psi^\diamond \Bigcdot}{\rmD(A)}$, are three norms in~$\bbR^{M_\sigma}$. Hence, there exists a constant~$\varpi_M>0$ such that
\begin{equation}\label{varpiM}
\norm{\Psi v}{\rmD(A)}^2\le \varpi_M\norm{\Psi^\diamond v}{V}^2\quad\mbox{and}\quad\norm{\Psi^\diamond v}{V}^2\le \varpi_M\norm{v}{\bbR^{M_\sigma}}^2,\qquad\mbox{for every}\quad v\in\bbR^{M_\sigma}.
\end{equation}

\begin{theorem}\label{T:main}
 Let Assumptions~\ref{A:A0sp}--~\ref{A:poincare} hold true. Then, for every~$\mu>0$ we can find if~$M\in\bbN_+$ and~$\lambda>0$ large enough,
so that the solution of system~\eqref{sys-y}
 satisfies
\begin{align}
& y\in W((0,+\infty),L^2,\rmD(A)')\quad\mbox{and}\notag\\
&\norm{y(t)}{V'}\le\ex^{-\mu (t-s)}\norm{y(s)}{V'},\quad\mbox{for all } t\ge s\ge0\mbox{ and all } y_0\in V'. \notag
\end{align}
Furthermore,~$M$ and~$\lambda$ can be chosen as~$M=\ovlineC{\mu,C_{\rm rc}}$
and~$\overline\lambda=\ovlineC{\mu,C_{\rm rc}}$. In addition, we have the bound
$\norm{y}{L^2((s,+\infty),L^2)}^2
\le (2+{\mu}^{-1}{C_{\rm rc}^2})\norm{y(s)}{V'}^2$, for all~$s\ge0$.
\end{theorem}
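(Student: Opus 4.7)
The plan is to use the Lyapunov functional $y\mapsto\norm{y}{V'}^2$, which, since Assumption~\ref{A:A0sp} makes $A\colon V\to V'$ an isometry, satisfies $\norm{y}{V'}^2 = \langle y, A^{-1}y\rangle_{V',V}$, and to establish the Gr\"onwall-type inequality $\tfrac{\rmd}{\rmd t}\norm{y}{V'}^2 + 2\mu \norm{y}{V'}^2 \le 0$. I would obtain this by testing \eqref{sys-y} with the multiplier $A^{-1}y\in V$. Since $A$ is symmetric and time independent, $\langle \dot y, A^{-1}y\rangle = \tfrac{1}{2}\tfrac{\rmd}{\rmd t}\norm{y}{V'}^2$ and $\langle Ay, A^{-1}y\rangle=\norm{y}{H}^2$; by Assumption~\ref{A:A1} and Young's inequality, $|\langle A_{\rm rc}y, A^{-1}y\rangle|\le\tfrac{1}{2}\norm{y}{H}^2 + \tfrac{C_{\rm rc}^2}{2}\norm{y}{V'}^2$. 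The feedback contributes $-\lambda\sum_{j=1}^{M_\sigma}\langle \fkd^{M,j}, A^{-1}y\rangle^2$. Together, these pieces deliver the a priori energy estimate
\[
\tfrac{1}{2}\tfrac{\rmd}{\rmd t}\norm{y}{V'}^2 + \tfrac{1}{2}\norm{y}{H}^2 + \lambda\sum_{j=1}^{M_\sigma}\langle\fkd^{M,j}, A^{-1}y\rangle^2 \le \tfrac{C_{\rm rc}^2}{2}\norm{y}{V'}^2.
\]

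Second, I would introduce the oblique projection $P_M z\coloneqq\sum_{j=1}^{M_\sigma}\langle\fkd^{M,j}, z\rangle\Psi^{M,j}$ acting on $\rmD(A)$. By the biorthogonality in Assumption~\ref{A:poincare}\ref{actaux_ij}, $P_M$ is indeed a projection onto $\widetilde\clU_M$ with kernel $\rmD_M$. Since $z-P_M z\in\rmD_M$ for every $z\in\rmD(A)$, \eqref{Poinc_const} gives $\norm{z-P_M z}{V}^2\le\xi_{M_+}^{-1}\norm{z-P_M z}{\rmD(A)}^2$; combined with the triangle inequality and $\norm{P_M z}{\rmD(A)}^2\le\varpi_M\norm{P_M z}{V}^2$ from \eqref{varpiM}, this yields $\norm{z-P_M z}{\rmD(A)}^2\le 2\norm{z}{\rmD(A)}^2 + 2\varpi_M\norm{P_M z}{V}^2$. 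Setting $z=A^{-1}y$ (so $\norm{z}{V}=\norm{y}{V'}$ and $\norm{z}{\rmD(A)}=\norm{y}{H}$) and invoking the other bound of \eqref{varpiM}, namely $\norm{P_M z}{V}^2\le\varpi_M\sum_{j=1}^{M_\sigma}\langle\fkd^{M,j},z\rangle^2$, I arrive at the interpolation-type inequality
\[
\norm{y}{V'}^2 \le \tfrac{4}{\xi_{M_+}}\norm{y}{H}^2 + \Bigl(2+\tfrac{4\varpi_M}{\xi_{M_+}}\Bigr)\varpi_M\sum_{j=1}^{M_\sigma}\langle\fkd^{M,j}, A^{-1}y\rangle^2.
\]

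Third, I multiply this interpolation inequality by $\mu+\tfrac{C_{\rm rc}^2}{2}$ and add it to the energy estimate. By Assumption~\ref{A:poincare}\ref{actaux_DM}, $M$ can be chosen so that $(\mu+\tfrac{C_{\rm rc}^2}{2})\tfrac{4}{\xi_{M_+}}\le\tfrac{1}{2}$; then, with $M$, $\varpi_M$, and $\xi_{M_+}$ fixed, I pick $\lambda\ge(\mu + \tfrac{C_{\rm rc}^2}{2})(2+4\varpi_M\xi_{M_+}^{-1})\varpi_M$. The feedback term absorbs the remaining $(\mu+\tfrac{C_{\rm rc}^2}{2})\norm{y}{V'}^2$ and the negative $\tfrac{1}{2}\norm{y}{H}^2$ absorbs the residual $\norm{y}{H}^2$-term, producing
\[
\tfrac{\rmd}{\rmd t}\norm{y}{V'}^2 + 2\mu\norm{y}{V'}^2 \le 0,
\]
whence Gr\"onwall on $[s,t]$ delivers the exponential bound. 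The $L^2$-in-time estimate follows by dropping the nonnegative feedback contribution in the energy inequality, integrating over $(s,\infty)$, and inserting $\int_s^\infty\norm{y(t)}{V'}^2\,\rmd t\le (2\mu)^{-1}\norm{y(s)}{V'}^2$ from the decay just proven.

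The main obstacle is twofold. First, the quantitative choices of the third step are delicate: as $M$ grows to make $\xi_{M_+}$ large, $\varpi_M$ may grow as well, forcing $\lambda$ to be taken even larger; tracking the dependencies of these constants on $\mu$ and $C_{\rm rc}$ to conclude that both $M$ and $\lambda$ admit a bound of the form $\ovlineC{\mu,C_{\rm rc}}$ requires care. Second, the energy identity must be justified for initial data $y_0\in V'\setminus H$, where $A^{-1}y_0\in V\setminus\rmD(A)$ and the feedback $\langle\fkd^{M,j}, A^{-1}y\rangle$ is not a priori defined at $t=0$; the plan is to prove existence and the a priori bounds first for $y_0\in H$ (using parabolic smoothing to guarantee $y(t)\in\rmD(A)$ for a.e.\ $t>0$) and then pass to $y_0\in V'$ via a density/continuity argument built on the uniform bounds already established.
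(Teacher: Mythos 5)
Your proposal is correct and follows essentially the same route as the paper: multiplying by $A^{-1}y$ is exactly the paper's change of variable $z=A^{-1}y$ tested with $2Az$, and your oblique projection $P_M$ together with the constants $\xi_{M_+}$ and $\varpi_M$ reproduces the content of Lemmas~\ref{L:OblDec} and~\ref{L:MlamPoinc} (your interpolation inequality is the rearranged form of the lower bound used there), after which the choices of $M$ and $\lambda$, the Gr\"onwall step, and the time integration for the $L^2$ bound coincide with the paper's argument, and your smoothing/density plan for $y_0\in V'$ plays the role of the paper's Galerkin remark. The only item left implicit is the estimate $\dot y\in L^2((0,+\infty),\rmD(A)')$ required for the claim $y\in W((0,+\infty),L^2,\rmD(A)')$, which, as in the paper, follows directly from the equation once $y\in L^2((s,+\infty),L^2)$ is established.
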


\begin{remark}
In particular, the estimate in Theorem~\ref{T:main} implies that $t\mapsto \norm{y(t)}{V'}$ is strictly decreasing at time~$t=s$, if~$\norm{y(s)}{V'}\ne0$; see~\cite[Lem.~3.3]{Rod21-aut}.
\end{remark}

For the proof of Theorem~\ref{T:main} we will need an auxiliary result.
For this purpose, for a given~$M\in\bbN_+$ and~$z\in\rmD(A)$,  we denote the vector
\begin{equation}\notag
\fkd(z)\coloneqq \left( \langle\fkd^{M,1},z\rangle,\langle\fkd^{M,2},z\rangle,\;\dots\,,\langle\fkd^{M,M_\sigma},z\rangle\right)\in\bbR^{M_\sigma}
\end{equation}
where the~$\fkd^{M,j}\in\rmD(A)'$ are the actuators.

\begin{lemma}\label{L:OblDec}
Every~$z\in\clD(A)$, can be written in a unique way as
\begin{align}\notag
z=\vartheta+\varTheta,\quad\mbox{with}\quad
\vartheta\coloneqq \Psi^\diamond\fkd(z)\in\widetilde\clU_{M}\quad\mbox{and}\quad
\varTheta\coloneqq z-\vartheta\in\rmD_M,
\end{align}
where~$\widetilde\clU_{M}$ and~$\rmD_M$ are as in Assumption~\ref{A:poincare}.
\end{lemma}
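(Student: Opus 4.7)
The plan is to verify both existence and uniqueness of the claimed decomposition directly from the biorthogonality relation in Assumption~\ref{A:poincare}~\ref{actaux_ij}. The whole argument reduces to projecting $z$ onto $\widetilde\clU_M$ via the map $\Psi^\diamond\circ\fkd$ and checking that the residue lies in~$\rmD_M$.

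For existence, I would simply define $\vartheta\coloneqq \Psi^\diamond\fkd(z)$, which lies in~$\widetilde\clU_M$ by construction (it is a finite linear combination of the $\Psi^{M,j}\in\rmD(A)$), and set $\varTheta\coloneqq z-\vartheta\in\rmD(A)$. To confirm that $\varTheta\in\rmD_M$ I would evaluate, for each $1\le i\le M_\sigma$, using the biorthogonality relation,
\begin{equation}\notag
\langle\fkd^{M,i},\vartheta\rangle
={\textstyle\sum\limits_{j=1}^{M_\sigma}}\langle\fkd^{M,j},z\rangle\,\langle\fkd^{M,i},\Psi^{M,j}\rangle
={\textstyle\sum\limits_{j=1}^{M_\sigma}}\langle\fkd^{M,j},z\rangle\,\delta_{ij}
=\langle\fkd^{M,i},z\rangle,
\end{equation}
so that $\langle\fkd^{M,i},\varTheta\rangle=\langle\fkd^{M,i},z\rangle-\langle\fkd^{M,i},\vartheta\rangle=0$ for every~$i$, which is precisely the defining property of~$\rmD_M$.

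For uniqueness, suppose $z=\vartheta_1+\varTheta_1=\vartheta_2+\varTheta_2$ with $\vartheta_k\in\widetilde\clU_M$ and $\varTheta_k\in\rmD_M$. Then $w\coloneqq\vartheta_1-\vartheta_2=\varTheta_2-\varTheta_1$ lies in $\widetilde\clU_M\cap\rmD_M$. Writing $w={\textstyle\sum_{j}}c_j\Psi^{M,j}$ and applying $\langle\fkd^{M,i},\Bigcdot\rangle$, the biorthogonality again yields $c_i=\langle\fkd^{M,i},w\rangle=0$ for all~$i$, hence $w=0$ and the decomposition is unique.

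There is no real obstacle here: the only ingredient beyond the definitions is the biorthogonality of $\widetilde U_M$ and $U_M$ granted by Assumption~\ref{A:poincare}\ref{actaux_ij}, together with the fact that $\widetilde U_M$ is linearly independent (which follows from $\dim\widetilde\clU_M=M_\sigma$). The map $\Psi^\diamond\circ\fkd\colon\rmD(A)\to\widetilde\clU_M$ is in fact the oblique projector onto~$\widetilde\clU_M$ along~$\rmD_M$, and the lemma is essentially the statement that $\rmD(A)=\widetilde\clU_M\oplus\rmD_M$ as a direct sum.
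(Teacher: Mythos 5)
Your proposal is correct and follows essentially the same route as the paper: existence via the biorthogonality relation \ref{actaux_ij} applied to $\varTheta=z-\Psi^\diamond\fkd(z)$, and uniqueness by showing $\widetilde\clU_M\cap\rmD_M=\{0\}$ through the same coefficient computation. No gaps.
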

\begin{proof}
We have that~$\vartheta\in\widetilde\clU_{M}$ due to the definition of~$\Psi^\diamond$ in~\eqref{Psi-map}. Next, by the relations
\begin{align}\notag
\langle\fkd^{M,i},\varTheta\rangle=\Bigl\langle\fkd^{M,i},\,z-{\textstyle\sum\limits_{j=1}^{M_\sigma}}\langle\fkd^{M,j},z\rangle\Psi^{M,j}\Bigr\rangle,
\end{align}
and by~\ref{actaux_ij} in Assumption~\ref{A:poincare}, we find~$\langle\fkd^{M,i},\varTheta\rangle=0$ for all~$\fkd^{M,i}$, hence
$\varTheta\in\rmD_M$. It remains to show that~$\widetilde\clU_{M}\bigcap D_M=\{0\}$, for this purpose
let us be given~$z\in\widetilde\clU_{M}\bigcap D_M$, then it follows that there exists~$\overline z\in\bbR^{M_\sigma}$ such that
\begin{align}\notag
 z={\textstyle\sum\limits_{j=1}^{M_\sigma}}\overline z_j\Psi^{M,j}\quad\mbox{and}\quad \Bigl\langle\fkd^{M,i},\,z\Bigr\rangle=0\quad\mbox{for all}\quad 1\le i\le M_\sigma.
\end{align}
Again by~\ref{actaux_ij} in Assumption~\ref{A:poincare}, we find $\overline z_i=\Bigl\langle\fkd^{M,i},\,z\Bigr\rangle$, which leads us to~$z=0$.
\end{proof}

\begin{lemma}\label{L:MlamPoinc}
Let Assumptions~\ref{A:A0sp}, ~\ref{A:A0cdc} and~\ref{A:poincare} hold true. Then, for every~$\zeta>0$ we can find~$M$ and~$\lambda$ large enough such that
\begin{equation}
\norm{z}{\rmD(A)}^2+2\lambda \norm{\fkd(z)}{\bbR^{M_\sigma}}^2
\ge\zeta \norm{z}{V}^2, \quad\mbox{for all}\quad
z\in \rmD(A).
\end{equation}
Furthermore~$M=\ovlineC{\zeta}$  and~$\lambda=\ovlineC{\zeta,\varpi_M}$, with~$\varpi_M$ as in~\eqref{varpiM}.
 \end{lemma}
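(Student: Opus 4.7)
The plan is to exploit the oblique decomposition $z = \vartheta + \varTheta$ from Lemma~\ref{L:OblDec}: the ``high-frequency'' part $\varTheta\in \rmD_M$ will be controlled via the Poincaré-type constant $\xi_{M_+}$, which can be made arbitrarily large by increasing $M$, while the ``low-frequency'' part $\vartheta = \Psi^\diamond\fkd(z)\in\widetilde\clU_M$ will be controlled by $\|\fkd(z)\|_{\bbR^{M_\sigma}}$ via~\eqref{varpiM}, absorbed by choosing $\lambda$ large enough.

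\textbf{Step 1.} Write $z = \vartheta + \varTheta$ as in Lemma~\ref{L:OblDec}, and use the triangle inequality to obtain $\|z\|_V^2 \le 2\|\vartheta\|_V^2 + 2\|\varTheta\|_V^2$.

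\textbf{Step 2.} Estimate the $\widetilde\clU_M$-piece. Since $\vartheta = \Psi^\diamond\fkd(z)$, the second inequality in~\eqref{varpiM} gives $\|\vartheta\|_V^2 \le \varpi_M\|\fkd(z)\|_{\bbR^{M_\sigma}}^2$, and combining both inequalities in~\eqref{varpiM} yields $\|\vartheta\|_{\rmD(A)}^2 \le \varpi_M^2\|\fkd(z)\|_{\bbR^{M_\sigma}}^2$.

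\textbf{Step 3.} Estimate the $\rmD_M$-piece using Assumption~\ref{A:poincare}\ref{actaux_DM}: $\|\varTheta\|_V^2 \le \xi_{M_+}^{-1}\|\varTheta\|_{\rmD(A)}^2$. Then bound $\|\varTheta\|_{\rmD(A)}^2 \le 2\|z\|_{\rmD(A)}^2 + 2\|\vartheta\|_{\rmD(A)}^2 \le 2\|z\|_{\rmD(A)}^2 + 2\varpi_M^2\|\fkd(z)\|_{\bbR^{M_\sigma}}^2$.

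\textbf{Step 4.} Substituting Steps~2 and~3 into Step~1 gives
\begin{equation}\notag
\|z\|_V^2 \le 4\xi_{M_+}^{-1}\|z\|_{\rmD(A)}^2 + \bigl(2\varpi_M + 4\xi_{M_+}^{-1}\varpi_M^2\bigr)\|\fkd(z)\|_{\bbR^{M_\sigma}}^2.
\end{equation}
Multiply by $\zeta$ and choose $M$ large enough so that $\xi_{M_+} \ge 4\zeta$ (possible by Assumption~\ref{A:poincare}\ref{actaux_DM}, so $M$ depends only on $\zeta$); this absorbs the $\|z\|_{\rmD(A)}^2$ term with coefficient $\le 1$. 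Then choose $\lambda$ large enough so that $2\lambda \ge \zeta(2\varpi_M + 4\xi_{M_+}^{-1}\varpi_M^2)$, which with the previous bound $\xi_{M_+}^{-1}\le (4\zeta)^{-1}$ can be taken as $\lambda = \ovlineC{\zeta,\varpi_M}$.

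The only delicate point is bookkeeping the quantitative dependence: one needs that $M$ is fixed by $\zeta$ alone (through the divergence of $\xi_{M_+}$), and only afterwards $\lambda$ is chosen in terms of $\zeta$ and the resulting $\varpi_M$. The rest is elementary manipulation of Young- and triangle-type inequalities.
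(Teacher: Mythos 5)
Your proof is correct and takes essentially the same route as the paper: the oblique decomposition of Lemma~\ref{L:OblDec}, the constant~$\xi_{M_+}$ to control the $\rmD_M$-component, $\varpi_M$ to control the $\widetilde\clU_M$-component, and the same order of choices ($M$ fixed by~$\zeta$ through~$\xi_{M_+}\ge 4\zeta$, then~$\lambda$ in terms of~$\zeta$ and~$\varpi_M$). The only cosmetic difference is directional bookkeeping: you bound~$\norm{z}{V}^2$ from above by the triangle inequality, whereas the paper expands~$\norm{z}{\rmD(A)}^2=\norm{\vartheta+\varTheta}{\rmD(A)}^2$ and absorbs the cross term with Young's inequality before passing to the $V$-norms.
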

\begin{proof}
Recall the sets~$U_M$ and~$\widetilde U_M$ of actuators~$\fkd^{M,j}$ and auxiliary functions~$\Psi^{M,j}$ and their linear spans as~$\clU_M$ and~$\widetilde \clU_M$, as well as the space~$\rmD_M$ in Assumption~\ref{A:poincare}.

For each~$z\in\clD(A)$, due to Lemma~\ref{L:OblDec}, we  can write
\begin{align}\notag
z=\vartheta+\varTheta,\quad\mbox{with}\quad
\vartheta\coloneqq \Psi^\diamond\fkd(z)\in\widetilde\clU_{M}\quad\mbox{and}\quad
\varTheta\coloneqq z-\vartheta\in\rmD_M.
\end{align}

Now, by direct computations, we find
\begin{align}
&\norm{z}{\rmD(A)}^2+2\lambda\norm{\fkd(z)}{\bbR^{M_\sigma}}^2 =\norm{\varTheta+\vartheta}{\rmD(A)}^2+2\lambda \norm{\fkd(z)}{\bbR^{M_\sigma}}^2\notag\\
&\hspace{3em}=\norm{\varTheta}{\rmD(A)}^2+2(\varTheta,\vartheta)_{\rmD(A)}+\norm{\vartheta}{\rmD(A)}^2+2\lambda \norm{\fkd(z)}{\bbR^{M_\sigma}}^2\notag\\
&\hspace{3em}\ge\tfrac12\norm{\varTheta}{\rmD(A)}^2-\norm{\vartheta}{\rmD(A)}^2+2\lambda \norm{\fkd(z)}{\bbR^{M_\sigma}}^2\notag
\end{align}
and, with~$\varpi_M$ as in~\eqref{varpiM} and~$\xi_{M_+}$ as in~\eqref{Poinc_const}, we arrive at
\begin{align}
\norm{z}{\rmD(A)}^2+2\lambda\norm{\fkd(z)}{\bbR^{M_\sigma}}^2
&\ge\tfrac12\xi_{M_+}\norm{\varTheta}{V}^2+(2\lambda\varpi_M^{-1} -\varpi_M)\norm{\vartheta}{V}^2.\notag
\end{align}
Hence, for given~$\zeta>0$, by choosing~$M$ and~$\lambda$ so that
\begin{equation}\notag
\xi_{M_+}\ge 4\zeta\quad\mbox{and}\quad\lambda\ge \zeta\varpi_M^{2}
\end{equation}
we arrive at
\begin{align}
\norm{y}{\rmD(A)}^2+2\lambda\norm{\fkd(z)}{\bbR^{M_\sigma}}^2
&\ge2\zeta\left(\norm{\varTheta}{V}^2+ \norm{\vartheta}{V}^2\right)
\ge \zeta\norm{\varTheta+\vartheta}{V}^2=\zeta\norm{z}{V}^2,\notag
\end{align}
which ends the proof.
\end{proof}

\begin{proof}[Proof of Theorem~\ref{T:main}]
Let~$X_{\rm rc}\coloneqq A^{-1}A_{\rm rc}A$ and~$z=A^{-1}y$ where~$y$ satisfies~\eqref{sys-y}.
Thus, $z$ satisfies
\begin{align}\label{sys-z}
&\dot z +A z+X_{\rm rc}z  =-\lambda\textstyle\sum\limits_{j=1}^{M_\sigma}\langle\fkd^{M,j},z\rangle A^{-1}\fkd^{M,j},\qquad
z(0)= A^{-1}y_0\in V.
       \end{align}

Multiplying the dynamics   in~\eqref{sys-z} by~$2Az$,
 we find\black
\begin{align}
\tfrac{\ed}{\ed t}\norm{z}{V}^2 &\le-2\norm{z}{\rmD(A)}^2-2(X_{\rm rc}z,Az)_{L^2}
-2\lambda \norm{\fkd(z)}{\bbR^{M_\sigma}}^2\notag
\end{align}
and, using Assumption~\ref{A:A1} and the Young inequality, we obtain,
\begin{align}
\tfrac{\ed}{\ed t}\norm{z}{V}^2 &\le-2\norm{z}{\rmD(A)}^2-2\lambda \norm{\fkd(z)}{\bbR^{M_\sigma}}^2+2C_{\rm rc}\norm{Az}{L^2}\norm{z}{V}\label{est-dtz0}\\
&\le-\norm{z}{\rmD(A)}^2-2\lambda \norm{\fkd(z)}{\bbR^{M_\sigma}}^2+C_{\rm rc}^2\norm{z}{V}^2,\label{est-dtz}
\end{align}
where we have used~$(X_{\rm rc}z,Az)_{L^2}=\langle A_{\rm rc}Az,A^{-1}Az\rangle_{V',V}=\langle A_{\rm rc}Az,z\rangle_{V',V}$.
Now, for a given~$\mu>0$, by Lemma~\ref{L:MlamPoinc} we can choose~$M=\ovlineC{\mu,C_{\rm rc}}$ and~$\lambda=\ovlineC{\mu,C_{\rm rc},\varpi_M}$,  such that
\begin{align}\label{DAdelV}
\norm{z}{\rmD(A)}^2+2\lambda \norm{\fkd(z)}{\bbR^{M_\sigma}}^2\ge (2\mu+C_{\rm rc}^2)\norm{z}{V}^2
\end{align}
which gives us
\begin{align}
\tfrac{\ed}{\ed t}\norm{z}{V}^2
&\le-2\mu\norm{z}{V}^2\notag
\end{align}
and consequently
\begin{equation}\label{nz.V}
\norm{z(t)}{V}^2\le\ex^{-2\mu(t-s)}\norm{z(s)}{V}^2,\quad\mbox{for all}\quad t\ge s\ge0,
\end{equation}
which, due to~$y=Az$, is equivalent to
\begin{equation}\label{ny.V'}
\norm{y(t)}{V'}^2\le\ex^{-2\mu(t-s)}\norm{y(s)}{V'}^2,\quad\mbox{for all}\quad t\ge s\ge0.
\end{equation}

Finally, by~\eqref{est-dtz0}, Young inequality, and~\eqref{DAdelV} ,
\begin{align}
\tfrac{\ed}{\ed t}\norm{z}{V}^2 &\le-\tfrac32\norm{z}{\rmD(A)}^2-2\lambda \norm{\fkd(z)}{\bbR^{M_\sigma}}^2+2C_{\rm rc}^2\norm{z}{V}^2\le-\tfrac12\norm{z}{\rmD(A)}^2+C_{\rm rc}^2\norm{z}{V}^2\notag
\end{align}
which, after time integration, gives us, for each~$t\ge s\ge0$,
\begin{equation}\notag
\norm{z(t)}{V}^2-\norm{z(s)}{V}^2+\tfrac12\norm{z}{L^2((s,t),\rmD(A))}^2\le C_{\rm rc}^2\norm{z}{L^2((s,t),V)}^2,
\end{equation}
and taking the limit as~$t\to+\infty$, and using~\eqref{nz.V},
\begin{equation}\label{bddL2DA}
\norm{z}{L^2((s,+\infty),\rmD(A))}^2\le 2\norm{z(s)}{V}^2+2C_{\rm rc}^2\norm{z}{L^2((s,+\infty),V)}^2
\le (2+2C_{\rm rc}^2(2\mu)^{-1})\norm{z(s)}{V}^2.
\end{equation}
Finally, with~$\clY\coloneqq L^2((s,+\infty),L^2)$, by combining~\eqref{bddL2DA} with~\eqref{sys-z}, we arrive at
\begin{align}
\norm{\dot z}{\clY}&\le \norm{Az}{\clY}+\norm{X_{\rm rc}z}{\clY}+\norm{\lambda\textstyle\sum_{j=1}^{M_\sigma}\langle\fkd^{M,j},z\rangle A^{-1}\fkd^{M,j}}{\clY}\notag\\
&\le\left(1+\norm{\Id}{\clL(V',\rmD(A)')}C_{\rm rc}+\lambda M_\sigma\dnorm{\fkd}{}\right)\norm{z}{L^2((s,+\infty),\rmD(A))},\notag
\end{align}
with~$\dnorm{\fkd}{}\coloneqq\max\limits_{1\le j\le M_\sigma}\dnorm{\fkd^{M,j}}{\rmD(A)'}$.
Therefore, ~$z\in W((s,+\infty),\rmD(A),L^2)$, which gives us that~$y=A^{-1}z\in W((s,+\infty),L^2,\rmD(A)')$ and, by~\eqref{bddL2DA}, also that~$\norm{y}{L^2((s,\infty),L^2)}\le (2+C_{\rm rc}^2\mu^{-1})\norm{y(s)}{V'}^2$  for all~$s\ge0$.
\end{proof}

\begin{remark}[On the existence and uniqueness of solutions]
Within the proof of Theorem~\ref{T:main}, we have shown the stability of
system~\eqref{sys-z} for~$M$ and~$\lambda$ large enough, where we have implicitly
assumed that strong solutions do exist. The existence and uniqueness of these solutions can be proven
by classical arguments based on Galerkin approximations. Since the procedure is standard, we  simply recall the main steps.
The estimates in the proof of Theorem~\ref{T:main} also hold for
Galerkin approximations of system~\eqref{sys-z} given by
\begin{align}\label{sys-y-intro-K-GalN}
 &\dot z^N +Az^N+ P_{\clE_N^{\rm f}}A_{\rm rc}(t)z^N =-\lambda P_{\clE_N^{\rm f}}\textstyle\sum\limits_{j=1}^{M_\sigma}\langle\fkd^{M,j},z\rangle A^{-1}\fkd^{M,j},\\
 & z^N(0)=  P_{\clE_N^{\rm f}}(A^{-1}y_0),
\end{align}
where~$P_{\clE_N^{\rm f}}\in\clL(H)$ stands  for the orthogonal projection in~$H$ onto the subspace~$\clE_N^{\rm f}=\linspan\{e_i\mid 1\le i\le N\}$ spanned by the first eigenfuntions of~$A$.

Therefore, for arbitrary given~$T>0$, by the analogue to~\eqref{nz.V} we will have that for~$M$ and~$\lambda$ large enough it holds that
\begin{equation}\label{dtny.V-Gal}
\norm{z^N}{L^\infty((0,T),V)}^2\le \norm{z^N(0)}{V}^2\le  \norm{z(0)}{V}^2.
\end{equation}
Then, by the analogue of~\eqref{est-dtz}, we can find that
\begin{equation}\label{dtny.DAV-Gal}
\norm{z^N}{L^2((0,T),\rmD(A))}^2\le C_2\norm{z^N(0)}{V}^2,
\end{equation}
with~$C_2$ independent of~$N$. Next, we can use the dynamics~\eqref{sys-y-intro-K-GalN} to obtain
\begin{equation}\label{dtny.H-Gal}
\norm{\dot z^N}{L^2((0,T),L^2)}^2\le C_3\norm{z^N(0)}{V}^2
\end{equation}
with~$C_2$ independent of~$N$.
These uniform (in~$N$) estimates above, allow us to show that a weak limit of a suitable subsequence of such Galerkin approximations is a
strong solution for system~\eqref{sys-z} defined on the time interval~$I_T\coloneqq(0,T)$.
The uniqueness of the solution can be concluded by~\eqref{nz.V},
which clearly also holds for the difference of two solutions, due to the linearity of the dynamics.
Finally,  since~$T>0$ is arbitrary, we can conclude the existence and uniqueness of a strong solution defined for all time~$t>0$.
\end{remark}

\section{Optimal control and Riccati equations}\label{S:optimal_loc}
In some applications we may be interested in stabilizing controls minimizing a given cost functional, in this section we discuss such an optimal control problem.

We shall address the Riccati equations associated to the optimal controls as well. Though the procedure is classical, this is not a standard problem in our setting due to the low spatial regularity of the state and control.

We fix a general set of~$M_0$ actuators
\begin{equation}\notag
U_{M_0}\coloneqq\{\fkd_{j}\mid 1\le j\le M_0\}\subset\rmD(A)',\qquad\clU_{M_0}\coloneqq\linspan U_{M_0},\qquad\dim\clU_{M_0}=M_0,
\end{equation}
and consider the open-loop version of system~\eqref{sys-y}, for time~$t\ge s$, as
\begin{align}\label{sys-y-opt-s-dyn}
 &\dot y +A y+A_{\rm rc}  y
 =Bu,\qquad y(s)= y_s,\qquad\mbox{with}\qquad Bu\coloneqq\textstyle\sum\limits_{j=1}^{M_0} u_j\fkd_{j}.
\end{align}
Note that $B\in \mathcal{L}(\mathbb{R}^{M_0},\rmD(A)')$. We assume that such actuators allow us to stabilize the system with a control input~$u=Ky$ given in feedback form, for example,  we can take actuators as given in Theorem~\ref{T:main}, with~$ M_0=M_\sigma$ large enough and~$K=(K_1,K_2,\dots,K_{M_\sigma})$, with~$K_jy=-\lambda\langle\fkd_{j},A^{-1}y\rangle$ as in~\eqref{sys-y}. In particular,  for any fixed~$\beta>0$, the energy functional
\begin{equation}\label{clJ-ori}
\widehat\clJ_s(y_s;y,u)\coloneqq\tfrac12\norm{y}{L^2((s,+\infty),H)}^2+\tfrac12\beta\norm{u}{L^2((s,+\infty),\bbR^{M_0})}^2
\end{equation}
will be bounded for the stabilizing feedback control input~$u=Ky$ and corresponding state~$y$. Thus, it makes sense to look for controls minimizing this  functional. Here, we shall relax this goal and shall rather be  looking for controls minimizing the ``relaxed'' functional
\begin{equation}\label{clJ}
\clJ_s(y_s;y,u)\coloneqq\tfrac12\norm{P_{\clE_{M_1}^\rmf}y}{L^2((s,+\infty),H)}^2+\tfrac12\beta\norm{u}{L^2((s,+\infty),\bbR^{M_0})}^2
\end{equation}
 where~$P_{\clE_{M_1}^\rmf}$ is the orthogonal projection in~$H$ onto the space
\begin{equation}\notag
\clE_{M_1}^\rmf=\linspan\{e_i\mid 1\le i\le M_1\},
\end{equation}
spanned by the first eigenfunctions~$e_i$ of the operator~$A$; see Section~\ref{S:assumptions}. We shall show in Theorem~\ref{T:bound-relM1-ori} that, for large enough~$M_1$, the boundedness of the relaxed  cost~\eqref{clJ} implies the boundedness of the original cost~\eqref{clJ-ori}. This relaxation can be important for numerical  computations,
We  shall revisit this point in Section~\ref{sS:rank-fact}.

\begin{theorem}\label{T:bound-relM1-ori}
Let every solution~$(y,u)$ of system~\eqref{sys-y-opt-s-dyn} satisfy~$\clJ_s(y_s;y,u)\le C_J\norm{y_s}{V'}^2$ with a constant~$C_J=C_J(\beta)>0$ independent of~$y_s$. If~$M_1$ is large enough, then we also have~$\widehat\clJ_s(y_s;y,u)\le\widehat C_J\norm{y_s}{V'}^2$, with a constant~$\widehat C_J=\widehat C_J(\beta)>0$ independent of~$y_s$.
\end{theorem}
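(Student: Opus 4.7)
The plan is to transform everything to the variable $z=A^{-1}y$ in which $\norm{z}{V}=\norm{y}{V'}$ and $\norm{z}{\rmD(A)}=\norm{y}{H}$, and in which $P_{\clE_{M_1}^\rmf}$ commutes with $A$ (both are diagonal in the eigenbasis~$(e_n)$). In these terms the originial and relaxed costs read
\[
\widehat\clJ_s=\tfrac12\norm{z}{L^2((s,+\infty),\rmD(A))}^2+\tfrac12\beta\norm{u}{L^2}^2,\qquad
\clJ_s=\tfrac12\norm{P_{\clE_{M_1}^\rmf}z}{L^2((s,+\infty),\rmD(A))}^2+\tfrac12\beta\norm{u}{L^2}^2,
\]
so the task reduces to controlling the high-frequency part $z_{\rm hi}\coloneqq(\Id-P_{\clE_{M_1}^\rmf})z$ in the $L^2(\rmD(A))$ norm. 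The pair $(y,u)$ given in the hypothesis yields, after applying $A^{-1}$, a solution of $\dot z+Az+X_{\rm rc}z=A^{-1}Bu$ with $z(s)=A^{-1}y_s\in V$, and $A^{-1}Bu\in L^2(H)$ since $A^{-1}\in\clL(\rmD(A)',H)$.

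The next step is the standard energy estimate for $z$: testing the $z$-equation against $2Az$ and repeating the calculation from the proof of Theorem~\ref{T:main} (in particular the identity $(X_{\rm rc}z,Az)_{L^2}=\langle A_{\rm rc}Az,z\rangle_{V',V}$), Assumption~\ref{A:A1} and two applications of Young's inequality give
\[
\tfrac{\ed}{\ed t}\norm{z}{V}^2+\norm{z}{\rmD(A)}^2\le 2C_{\rm rc}^2\norm{z}{V}^2+2C_B^2\norm{u}{\bbR^{M_0}}^2,
\qquad C_B\coloneqq\dnorm{A^{-1}B}{\clL(\bbR^{M_0},H)}.
\]
The key observation is that, because $z_{\rm hi}$ is spectrally supported on $\{e_n:n>M_1\}$, the spectral representation of the norms yields
\[
\norm{z_{\rm hi}}{V}^2\le\alpha_{M_1+1}^{-1}\norm{z_{\rm hi}}{\rmD(A)}^2\le\alpha_{M_1+1}^{-1}\norm{z}{\rmD(A)}^2.
\]
Splitting $\norm{z}{V}^2=\norm{z_{\rm lo}}{V}^2+\norm{z_{\rm hi}}{V}^2$ with $z_{\rm lo}\coloneqq P_{\clE_{M_1}^\rmf}z$ and choosing $M_1$ large enough that $2C_{\rm rc}^2\alpha_{M_1+1}^{-1}\le\tfrac12$, we can absorb the high-frequency part of the reaction term into the coercive left-hand side:
\[
\tfrac{\ed}{\ed t}\norm{z}{V}^2+\tfrac12\norm{z}{\rmD(A)}^2\le 2C_{\rm rc}^2\norm{z_{\rm lo}}{V}^2+2C_B^2\norm{u}{\bbR^{M_0}}^2.
\]

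Integrating over $(s,T)$, dropping the nonnegative boundary term $\norm{z(T)}{V}^2$, and letting $T\to+\infty$, we obtain
\[
\tfrac12\norm{z}{L^2((s,+\infty),\rmD(A))}^2\le\norm{z(s)}{V}^2+2C_{\rm rc}^2\norm{z_{\rm lo}}{L^2V}^2+2C_B^2\norm{u}{L^2}^2.
\]
The hypothesis $\clJ_s\le C_J\norm{y_s}{V'}^2$ bounds $\norm{u}{L^2}^2\le 2\beta^{-1}C_J\norm{y_s}{V'}^2$ and $\norm{z_{\rm lo}}{L^2\rmD(A)}^2=\norm{P_{\clE_{M_1}^\rmf}y}{L^2H}^2\le 2C_J\norm{y_s}{V'}^2$; combined with the finite-dimensional estimate $\norm{z_{\rm lo}}{V}^2\le\alpha_1^{-1}\norm{z_{\rm lo}}{\rmD(A)}^2$, this yields $\norm{y}{L^2H}^2=\norm{z}{L^2\rmD(A)}^2\le\widetilde C\norm{y_s}{V'}^2$ with $\widetilde C=\widetilde C(\beta)$. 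Adding back the already-bounded $\tfrac12\beta\norm{u}{L^2}^2$ gives $\widehat\clJ_s\le\widehat C_J\norm{y_s}{V'}^2$. The expected main technical point is justifying the energy estimate (i.e.\ that the pair $(y,u)$ actually enjoys the regularity $z\in W((s,+\infty),\rmD(A),H)$ needed to test against $2Az$); this is standard and can be handled by carrying out the same estimate on the Galerkin approximations discussed in the remark following Theorem~\ref{T:main}, and then passing to the limit in the inequality.
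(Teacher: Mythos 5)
Your proposal is correct and, up to bookkeeping, it is the paper's own argument: the paper performs the same energy estimate at the $V'$--$H$ level, applied to the high-frequency component $Q=(\Id-P_{\clE_{M_1}^\rmf})y$ of the state, absorbs the reaction--convection term through the same spectral-gap condition $\alpha_{M_1+}\ge 4C_{\rm rc}^2$, and bounds the remaining terms by the hypothesis on $\norm{u}{L^2((s,+\infty),\bbR^{M_0})}$ and $\norm{P_{\clE_{M_1}^\rmf}y}{L^2((s,+\infty),H)}$, exactly as you do after conjugating by $A^{-1}$. The only differences are presentational (you estimate all of $z=A^{-1}y$ and split only the right-hand side, rather than writing a separate equation for the high modes), and your concluding remark that the estimate is justified via Galerkin approximations is consistent with how the paper handles regularity.
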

\begin{proof}
With~$q\coloneqq P_{\clE_{M_1}^\rmf}y$ and ~$Q\coloneqq y- P_{\clE_{M_1}^\rmf}y$, we find
\begin{align}
 &\dot Q +A Q+(\Id-P_{\clE_{M_1}^\rmf})A_{\rm rc}  Q
 =F \notag
\end{align}
with~$F\coloneqq (\Id-P_{\clE_{M_1}^\rmf})Bu-(\Id-P_{\clE_{M_1}^\rmf})A_{\rm rc}  q$ and
\begin{align}
\tfrac{\rmd}{\rmd t}\norm{Q}{V'}^2 +2\norm{Q}{H}^2 &=-2\langle(\Id-P_{\clE_{M_1}^\rmf})A_{\rm rc}  Q,A^{-1}Q\rangle_{V',V} +2\langle F,A^{-1}Q\rangle_{\rmD(A)',\rmD(A)} \notag\\
  &\le2C_{\rm rc}\norm{Q}{L^2}\norm{Q}{V'}+2\norm{F}{\rmD(A)'}\norm{Q}{H} \notag\\
  & \le\norm{Q}{H}^2+2C_{\rm rc}^2\norm{Q}{V'}^2+2\norm{F}{\rmD(A)'}^2\notag
\end{align}
where~$\norm{F}{\rmD(A)'}\le C_B\norm{u}{\bbR^{M_0}}+C_{\rm rc}\norm{q}{H}$ for a suitable constant~$C_B>0$, which gives us
\begin{align}
\tfrac{\rmd}{\rmd t}\norm{Q}{V'}^2  +\tfrac12\norm{Q}{H}^2
  & \le-\tfrac12\norm{Q}{H}^2+2C_{\rm rc}^2\norm{Q}{V'}^2+4(C_B^2\norm{u}{\bbR^{M_0}}^2+C_{\rm rc}^2\norm{q}{H}^2).\notag
\end{align}
By denoting the eigenvalue of~$A$ defined as
\begin{equation}\notag
\alpha_{M_1+}\coloneqq\min_{Q\in\rmD(A)\bigcap \clE_{M_1}^\perp}\tfrac{\norm{AQ}{H}}{\norm{Q}{H}}=\min_{Q\in\clE_{M_1}^\perp}\tfrac{\norm{Q}{H}^2}{\norm{Q}{V'}^2},
\end{equation}
where~$\clE_{M_1}^\perp$ is the orthogonal space to~$\clE_{M_1}^\perp$ in~$H$, we find that
\begin{align}
\tfrac{\rmd}{\rmd t}\norm{Q}{V'}^2  +\tfrac12\norm{Q}{H}^2
  & \le-(\tfrac12\alpha_{M_1+}-2C_{\rm rc}^2)\norm{Q}{V'}^2+4(C_B^2\norm{u}{\bbR^{M_0}}^2+C_{\rm rc}^2\norm{q}{H}^2).\notag
\end{align}
Now, for~$M_1$ is large enough such that~$\alpha_{M_1+}\ge 4C_{\rm rc}^2$, we obtain
\begin{align}
\tfrac{\rmd}{\rmd t}\norm{Q}{V'}^2  +\tfrac12\norm{Q}{H}^2
  & \le4(C_B^2\norm{u}{\bbR^{M_0}}^2+C_{\rm rc}^2\norm{q}{H}^2), \notag
\end{align}
and time integration gives us for all~$t\ge s\ge 0$,
\begin{align}
\norm{Q(t)}{V'}^2  +\tfrac12\norm{Q}{L^2((s,t),H)}^2
  & \le\norm{Q(s)}{V'}^2+4(C_B^2\norm{u}{L^2((s,t),\bbR^{M_0})}^2+C_{\rm rc}^2\norm{q}{L^2((s,t),H)}^2), \notag\\
  & \le\norm{Q(s)}{V'}^2+8(C_B^2+C_{\rm rc}^2)C_J\norm{y_s}{V'}^2, \notag
 \end{align}
which implies
\begin{align}\notag
\norm{Q}{L^\infty((s,+\infty),V')}^2  +\tfrac12\norm{Q}{L^2((s,+\infty),H)}^2
  & \le(1+8(C_B^2+C_{\rm rc}^2)C_J)\norm{y_s}{V'}^2.
\end{align}
In particular,
\begin{align}
\widehat\clJ_s(y_s;y,u)&=\tfrac12\norm{y}{L^2((s,+\infty),H)}^2+\tfrac12\beta\norm{u}{L^2((s,+\infty),\bbR^{M_0})}^2\notag\\
&=\tfrac12\norm{q}{L^2((s,+\infty),H)}^2+\tfrac12\beta\norm{u}{L^2((s,+\infty),\bbR^{M_0})}^2+\tfrac12\norm{Q}{L^2((s,+\infty),H)}^2\notag\\
&\le(C_J+1+8(C_B^2+C_{\rm rc}^2)C_J)\norm{y_s}{V'}^2,\notag
\end{align}
and the result follows with~$\widehat C_J\coloneqq1+(1+8C_B^2+8C_{\rm rc}^2)C_J$.
\end{proof}

\subsection{The optimal control problem}
Recalling Theorems~\ref{T:main} and~\ref{T:bound-relM1-ori}, for~$M_0$ and~$M_1$ large enough, it is justified to consider the following optimal control problem.
\begin{problem}\label{P:OPT}
For given~$s\ge0$ and~$y_s\in V'$, find a control~$\overline u\in L^2(s+\bbR_+,\bbR^{M_0})$ and corresponding state~$\overline y\in L^2((s,+\infty),H)$ such that
$(y, u)=(\overline y,\overline u)$ satisfies~\eqref{sys-y-opt-s-dyn}, for~$t\ge s$, and minimizes the cost functional~$\clJ_s$ as in~\eqref{clJ}.
 \end{problem}

By the linearity of the dynamics and by the convexity of~$(y,u)\mapsto\clJ_s(y_s;y,u)$ it follows that the optimal pair~$(\overline y,\overline u)=(\overline y,\overline u)(y_s)$ solving Problem~\ref{P:OPT} is unique. The existence of an optimal pair can be shown by a classical  minimizing sequence argument.

We shall  show that the optimal control~$\overline u$ can be taken in feedback form
$\overline u=-B^*\Pi\overline y$, where the operator~$\Pi=\Pi(t)$, can be found by solving a suitable Riccati operator equation, with~$\Pi(t)\in\clL(V',V)$ for almost all~$t>0$.

\begin{assumption}\label{A:boundOPT}
We assume that the optimal pair solving Problem~\ref{P:OPT} satisfies the bound~$\clJ_s(y_s;\overline y,\overline u)\le C_J
\norm{y_s}{V'}^2$ with~$C_J$ independent of~$(s,y_s)$.
\end{assumption}
Observe that Assumption~\ref{A:boundOPT} is satisfied if we take actuators as in Theorem~\ref{T:main}. Indeed, we have~$\clJ_s(y_s;\overline y,\overline u)\le \clJ_s(y_s;y,u)$ where~$(y,u)$ is the state-control pair in Theorem~\ref{T:main}, with~$u_j=-\lambda\langle\fkd_{j},A^{-1}y\rangle$. This theorem gives us~$\norm{P_{\clE_{M_1}^\rmf}y}{L^2((s,+\infty),L^2)}^2\le \norm{y}{L^2((s,+\infty),L^2)}^2\le C_0\norm{y_s}{V'}^2$, with~$C_0$ independent of~$(s,y_s)$. Then, for the control input we also find~$\norm{u}{L^2((s,+\infty),\bbR^{M_\sigma})}^2\le C_1\norm{y_s}{V'}^2$, with~$C_1$ independent of~$(s,y_s)$.

Next, observe that by using the optimal pair~$(\overline y,\overline u)=(\overline y,\overline u)(y_s)$ we can define a semi-scalar product on~$V'$ (cf.~\cite[Ch.~I, Def.~1.1]{Conway90}) as follows
\begin{equation}\notag
0\le ((y_s^1,y_s^2))\coloneqq(P_{\clE_{M_1}^\rmf}\overline y(y_s^1),P_{\clE_{M_1}^\rmf}\overline y(y_s^2))_{L^2((s,+\infty),L^2)}+\beta(\overline u(y_s^1),\overline u(y_s^2))_{L^2((s,+\infty),\bbR^{M_0})}.
\end{equation}

The optimal cost, associated to the initial state~$y_s$, reads~$\clJ_s(y_s;\overline y,\overline u)=\tfrac12((y_s,y_s))$. Furthermore,  for each fixed~$y_s^1$ the mapping~$y_s^2\mapsto \xi_{y_s^1}(y_s^2)\coloneqq((y_s^1,y_s^2))$ is linear and bounded, and $\xi_{y_s^1}\in V=V''=\clL(V',\bbR)$.
Therefore, we have the representation
\begin{equation}\notag
 ((y_s^1,y_s^2))=\langle \Pi_sy_s^1,y_s^2\rangle_{V,V'},\qquad \Pi_sy_s^1\coloneqq\xi_{y_s^1},\qquad \Pi_s\in\clL(V',V).
\end{equation}
Also, note that~$\Pi_s$ is well defined, because if~$\Xi_s$ has the same properties it follows that
$0=\langle (\Pi_s-\Xi_s)y_s^1,y_s^2\rangle_{V,V'}$ for all~$(y_s^1,y_s^2)\in V'\times V'$
which implies that~$\Pi_s-\Xi_s=0$. We can also see that~$\Pi_s$ is linear. Further, it is bounded due to
$\langle \Pi_sy_s^1,y_s^2\rangle_{V,V'}= ((y_s^1,y_s^2))\le 2\clJ_s(y_s^1;\overline y^1,\overline u^1)^\frac12\clJ_s(y_s^2;\overline y^2,\overline u^2)^\frac12\le 2C_J\norm{y_s^1}{V'}\norm{y_s^2}{V'}$, with~$C_J$ as in Assumption~\ref{A:boundOPT}. Thus, with $\Pi_s\in\clL(V',V)$  we have the representation
\begin{equation}\label{Pis-repr}
0\le\tfrac12\langle \Pi_sy_s,y_s\rangle_{V,V'}=\tfrac12((y_s,y_s))=\clJ_s(y_s;\overline y,\overline u)
\end{equation}
for the optimal cost associated with Problem~\ref{P:OPT}.

\subsection{Dynamic programming principle}\label{sS:adjoint}
Let us consider the following auxiliary optimal control problem.
\begin{problem}\label{P:OPT-aux}
Given~$s\ge0$ and~$y_0\in V'$, find a control~$\underline u\in L^2((0,s),\bbR^{M_0})$ and corresponding state~$\underline y\in L^2((0,s),H)$ such that
$(y, u)=(\underline y,\underline u)$ satisfies~\eqref{sys-y-opt-s-dyn}, for~$t\in(0,s)$, with~$y(0)=y_0$ and minimizes the cost functional
\begin{align}\label{cost-aux}
&\clJ_0^s(y_0;y,u)\coloneqq\tfrac12\norm{P_{\clE_{M_1}^\rmf}y}{L^2((0,s),H)}^2+\beta\tfrac12\norm{u}{L^2((0,s),\bbR^{M_0})}^2+\tfrac12\langle \Pi_sy(s),y(s)\rangle_{V,V'}.
\end{align}
 \end{problem}

Note that~\eqref{cost-aux} includes a cost as~\eqref{clJ}, now in the finite time interval~$(0,s)$, and the optimal cost-to-go $\tfrac12\langle \Pi_sy(s),y(s)\rangle_{V,V'}$ after time~$t=s$ as in~\eqref{Pis-repr}.

Let~$(\overline y_{[0]},\overline u_{[0]})=(\overline y,\overline u)(y_0)$ be the optimal pair associated to Problem~\ref{P:OPT} in the case~$s=0$ with initial state~$y_0$. The dynamic programming principle tells us that this pair is related to the pair solving Problem~\ref{P:OPT-aux} as follows
\begin{equation}\label{DPP}
(\overline y_{[0]},\overline u_{[0]})\rest{(0,s)}=(\underline y,\underline u)(y_0),\qquad (\overline y_{[0]},\overline u_{[0]})\rest{(s,+\infty)}=(\overline y,\overline u)(\underline y(s)),
\end{equation}
where~$(\underline y,\underline u)(\underline y(s))$ is the pair solving Problem~\ref{P:OPT}, with initial state~$y_s=\underline y(s)=\overline y_{[0]}(s)$.

\subsection{Optimal feedback}\label{sS:feedback}
We show that the optimal control~$\overline u$ solving Problem~\ref{P:OPT} can be taken in feedback form
$\overline u(t)=-B^*\Pi_t\overline y(t)$. For this purpose, we use standard tools from optimal control, namely, the Karush-Kuhn-Tucker conditions associated with Problem~\ref{P:OPT-aux}. For this purpose we define the spaces
\begin{align}
&\clX_0^s\coloneqq W((0,s),L^2,\rmD(A)')\times L^2((0,s),\bbR^{M_0}),\\
&\clY_0^s\coloneqq V'\times L^2((0,s),\rmD(A)')
\intertext{together with the mapping}
&\Psi\colon \clX_0^s\to \clY_0^s,\qquad(y,u)\mapsto (y(0),\dot y +A y+A_{\rm rc}  y -Bu).
\end{align}
Note that~$\Psi$ is surjective, because system~\eqref{sys-y-opt-s-dyn}, with~$h$
in the place of~$Bu$ and with~$s=0$,
\begin{align}\label{sys-y-h}
 &\dot y +A y+A_{\rm rc}  y
 =h,\quad y(0)= y_0,
\end{align}
has a solution ~$y\in\clX_0^s$ (again, we can reason through Galerkin approximations. In particular,
we can take~$u=0$).

By the Karush-Kuhn-Tucker Theorem~\cite[Sect.~A.1]{BarRodShi11}
\cite[Thm.~3.1; Eqs.~(1.1) and~(1.4)]{ZoweKurcyusz79},
we have that there exists~$(\xi,p)\in(\clY_0^s)'=V\times L^2((0,s),\rmD(A))$ such that
\begin{equation}\notag
\rmd\clJ_0^s\rest{(\underline y,\underline u)}=(\xi,p)\circ\Psi,
\end{equation}
which implies that
\begin{align}
&(P_{\clE_{M_1}^\rmf}\underline y,P_{\clE_{M_1}^\rmf}w)_{L^2((0,s),L^2)}+\beta(\underline u,v)_{L^2((0,s),\bbR^{M_0})}+\langle \Pi_s\underline y(s),w(s)\rangle_{V,V'}\notag\\
&\hspace{3em}=\langle w(0),\xi\rangle_{V',V}
+\langle\dot w +A w+A_{\rm rc}  w -Bv,p\rangle_{L^2((0,s),\rmD(A)'),L^2((0,s),\rmD(A))}\notag
\end{align}
for all~$(w,v)\in\clX_0^s$. After standard arguments, for time~$t\in(0,s)$, we find that
\begin{align}
&\dot p-A p-(A_{\rm rc})^*p +P_{\clE_{M_1}^\rmf}\underline y=0,\qquad p(s)= \Pi_s\underline y(s),\notag\\
&\beta\underline u=-B^*p=-B^*\Pi_s\underline y(s).\notag
\end{align}

Recalling~\eqref{DPP}, the dynamics of system~\eqref{sys-y-opt-s-dyn} with the optimal pair~$(y,u)=(\overline y_{[0]},\overline u_{[0]})$ solving Problem~\ref{P:OPT} for time $t\ge0$ and with initial state~$y_0\in V$ reads
\begin{align}\label{sys-y-feed}
&\dot{y} =-Ay-A_{\rm rc}y
 -\beta^{-1}BB^*\Pi y,\qquad y(0)= y_0,
\end{align}
with~$\Pi(t)=\Pi_t$. Further, since~$p\in L^2((0,s),\rmD(A))$, from the relation~$p(s)= \Pi_s\underline y(s)=\Pi_s\overline y_{[0]}(s)$, we conclude that in~\eqref{sys-y-feed} we will have that~$\Pi_t y(t)\in\rmD(A)$, for almost all~$t>0$. In particular, we have that the vector~$B^*\Pi y\in\bbR^{M_\sigma}$, with coordinates~$(B^*\Pi y)_j=\langle \fkd_j,\Pi y\rangle_{\rmD(A)',\rmD(A)}$, is well defined for almost all~$t>0$.

\subsection{Imposing the exponential stability rate}\label{sS:feedback-mu}

Let us fix~$\mu_{\rm ric}>0$ and introduce the shifted reaction-convection term as
\begin{equation}\notag
A_{\rm rc}^{\mu_{\rm ric}}\coloneqq A_{\rm rc}-\mu_{\rm ric}\Id,\qquad\mu_{\rm ric}>0.
\end{equation}
Since~$A_{\rm rc}^{\mu_{\rm ric}}$ satisfies Assumption~\ref{A:A1} (with a new bound~$C_{\rm rc}^{\mu_{\rm ric}}=C_{\rm rc}+\mu_{\rm ric}\norm{\Id}{\clL(H,V')}$)  we can apply Theorem~\ref{T:main}  (with some~$\mu>0$) to guarantee the existence of a stabilizing control for~$M$ and~$\lambda$  large enough. Then, considering the analogue to Problem~\ref{P:OPT} and following the discussion in previous sections, we arrive at the corresponding cost-to-go
\begin{equation}\label{Pis-repr-mu}
\tfrac12\langle \Pi_s^{\mu_{\rm ric}} \widetilde y_s,\widetilde y_s\rangle_{V,V'}=\clJ_s(\widetilde y_s;\overline y,\overline u),
\end{equation}
with the solution of the analogue of~\eqref{sys-y-feed}
\begin{align}\label{sys-y-feed-mu}
&\dot{\widetilde y} =-A\widetilde y-(A_{\rm rc}-\mu_{\rm ric}\Id)\widetilde y
 -\beta^{-1}BB^*\Pi^{\mu_{\rm ric}} \widetilde y,\qquad y(0)= y_0,
\end{align}
giving us, by taking~$\widetilde u=\beta^{-1}B^*\Pi^{\mu_{\rm ric}} \widetilde y$, the minimizer of the cost functional
\begin{align}\label{sys-y-opt-s-cost-mu}
&\clJ_s(\widetilde y_s;\widetilde y,\widetilde u)=\tfrac12\norm{P_{\clE_{M_1}^\rmf}\widetilde y}{L^2((s,+\infty),H)}^2+\beta\tfrac12\norm{\widetilde u}{L^2((s,+\infty),\bbR^{M_0})}^2.
\end{align}
Now observe that by Theorem~\ref{T:bound-relM1-ori},  $\norm{\widetilde y}{L^2((s,+\infty),H)}^2$ will be bounded for large~$M_1$, and by using Datko Theorem~\cite[Thm.~1]{Datko72} we can derive that~$\norm{\widetilde y}{V'}$ is exponentially decreasing with some small rate~$\mu_0>0$. Therefore, ~$\norm{y}{V'}\coloneqq\rme^{-\mu_{\rm ric}t}\norm{\widetilde y}{V'}$ is exponentially decreasing with rate at least~$\mu_{\rm ric}$ and this~$y$ solves
\begin{align}\label{sys-y-feed-mu-back}
&\dot{y} =-A y-A_{\rm rc} y
 -\beta^{-1}BB^*\Pi^{\mu_{\rm ric}}  y,\qquad y_s= \rme^{-\mu_{\rm ric}s}\widetilde y_s.
\end{align}
Further, note that the minimization of~\eqref{sys-y-opt-s-cost-mu} subject to
\begin{align}\label{sys-y-opt-s-dyn-ric}
 &\dot{\widetilde y} +A\widetilde y+(A_{\rm rc}-\mu_{\rm ric}\Id) \widetilde y
 =B\widetilde u,\quad \widetilde y(s)= \widetilde y_s.
\end{align}
is equivalent  to the minimization of
\begin{align}\label{Jmu-ori}
&\clJ_s^{\mu_{\rm ric}}(y_s; y, u)\coloneqq\tfrac12\norm{\rme^{\mu_{\rm ric}\Bigcdot}P_{\clE_{M_1}^\rmf} y}{L^2((s,+\infty),H)}^2+\beta\tfrac12\norm{\rme^{\mu_{\rm ric}\Bigcdot} u}{L^2((s,+\infty),\bbR^{M_0})}^2,
\end{align}
subject to~\eqref{sys-y-opt-s-dyn}.

\subsection{Riccati}\label{sS:riccati}
It is well known that with actuators in the pivot space~$H$ the cost-to-go operator~$\Pi^{\mu_{\rm ric}}$ satisfies a Riccati  equation. We confirm here that this is also the case for the ``less regular''  actuators  that we consider, in the space~$\rmD(A)'\supset H$.

From~\eqref{Pis-repr-mu} and~\eqref{sys-y-opt-s-cost-mu}, with~$\overline y(s)=w$, for a generic~$w\in V'$, we find
\begin{align}
\tfrac{\rmd}{\rmd t}\langle \Pi \overline y,\overline y\rangle_{V,V'}\rest{t=s}&=2\tfrac{\rmd}{\rmd t}\clJ_s(w;\overline y,\overline u)=-\norm{P_{\clE_{M_1}^\rmf}w}{H}^2-\beta\norm{\widetilde u(s)}{\bbR^{M_0}}^2\notag\\
&=-(P_{\clE_{M_1}^\rmf}w,w)_{H}-\beta^{-1}(B^*\Pi^{\mu_{\rm ric}} w,B^*\Pi^{\mu_{\rm ric}} w)_{\bbR^{M_0}}\notag\\
&=-\langle P_{\clE_{M_1}^\rmf}w,w\rangle_{V,V'}-\beta^{-1}\langle (B^*\Pi^{\mu_{\rm ric}})^*B^*\Pi^{\mu_{\rm ric}} w, w\rangle_{V,V'}.
\label{Ric1}
\end{align}
Let us denote
\begin{equation}\label{Arc-ric}
L\coloneqq-A-A_{\rm rc}+\mu_{\rm ric}\Id.
\end{equation}
Using the dynamics~\eqref{sys-y-feed-mu}, we expand the left-hand side of~\eqref{Ric1} (proceeding analogously as in~\cite[Sect.~3.2, Rem.~3.11(b)]{BarRodShi11}). Formally, we find
\begin{align}
&\tfrac{\rmd}{\rmd t}\langle \Pi^{\mu_{\rm ric}}\overline  y,\overline y\rangle=\langle \dot\Pi^{\mu_{\rm ric}} \overline y,\overline y\rangle
+\langle \dot{\overline  y}, \Pi^{\mu_{\rm ric}} \overline y\rangle+\langle \Pi^{\mu_{\rm ric}} \overline y,\dot{\overline  y}\rangle\notag\\
&\hspace{3em}=\langle \dot\Pi^{\mu_{\rm ric}} \overline y,\overline y\rangle
+\langle L\overline y
 -\beta^{-1}BB^*\Pi^{\mu_{\rm ric}}\overline  y,\Pi^{\mu_{\rm ric}}\overline  y\rangle+\langle \Pi^{\mu_{\rm ric}}\overline  y,L\overline y
 -\beta^{-1}BB^*\Pi^{\mu_{\rm ric}}\overline  y\rangle,\notag
\end{align}
hence, at $t=s$,
\begin{equation}
\tfrac{\rmd}{\rmd t}\langle \Pi^{\mu_{\rm ric}} \overline y,\overline y\rangle|_{t=s} =\langle (\dot\Pi^{\mu_{\rm ric}} +\Pi^{\mu_{\rm ric}} L+L^*\Pi^{\mu_{\rm ric}} -2\beta^{-1}(B^*\Pi^{\mu_{\rm ric}})^*B^*\Pi^{\mu_{\rm ric}})w,w\rangle.
\label{Ric2}
\end{equation}

Combining~\eqref{Ric1} and~\eqref{Ric2}, we obtain the Riccati equation
\begin{align}
-\dot\Pi= (L^*\Pi^{\mu_{\rm ric}})^*+L^*\Pi^{\mu_{\rm ric}}-\beta^{-1}(B^*\Pi^{\mu_{\rm ric}})^*B^*\Pi^{\mu_{\rm ric}}+P_{\clE_{M_1}^\rmf}.\label{Riccati0}
\end{align}

As we have seen, in~\eqref{sys-y-feed} we will have~$\Pi^{\mu_{\rm ric}}(t) y(t)\in\rmD(A)$ for almost all $t>0$. Let
\begin{equation}\notag
Z(t)\coloneqq \{w\in H\mid \Pi^{\mu_{\rm ric}}(t) w\in\rmD(A)\}.
\end{equation}
We have~$B^*\Pi^{\mu_{\rm ric}}(t)\colon Z(t)\to\bbR^{M_0}$ and~$L^*(t)\Pi^{\mu_{\rm ric}} (t)\colon Z(t)\to H$. Thus~$(L^*\Pi^{\mu_{\rm ric}})^*\colon H\to Z(t)'$ and~$(B^*\Pi^{\mu_{\rm ric}})^*\colon \bbR^{M_0}\to Z(t)'$ are well defined. This implies that all the operators on the right hand side of~\eqref{Riccati0} are well defined from~$Z(t)$ into~$Z(t)'$. Thus also~$\dot\Pi^{\mu_{\rm ric}}$ maps~$Z(t)$ into~$Z(t)'$.
Keeping in mind that we should understand it as~\eqref{Riccati0}, we shall still write the Riccati equation in the canonical form
\begin{align}\label{Riccati}
&\dot\Pi^{\mu_{\rm ric}}+ \Pi^{\mu_{\rm ric}} L+L^*\Pi^{\mu_{\rm ric}} -\Pi^{\mu_{\rm ric}} B_\beta B_\beta^*\Pi^{\mu_{\rm ric}}+C^*C=0
\end{align}
with $B_\beta\coloneqq\beta^{-\frac12}B\quad\mbox{and}\quad C=P_{\clE_{M_1}^\rmf}$.

\section{Application to scalar parabolic equations}\label{S:parab}

We consider system~\eqref{sys-y-parab-intro-expl}, in a rectangular domain~$\Omega=\bigtimes_{n=1}^d(0,l_n)\subset\bbR^d$, $d\in\{1,2,3\}$, with side-lengths~$l_n>0$, $1\le n\le d$,  which we rewrite in evolutionary form as
\begin{subequations}\label{sys-y-parab-abst}
\begin{align}\label{sys-y-parab-abst-dyn}
 &\dot y +A y+A_{\rm rc}  y
 = -\lambda\textstyle\sum\limits_{j=1}^{M_\sigma}\langle\deltafun_{x^j},A^{-1}y(\Bigcdot,t)\rangle_{\rmD(A)',\rmD(A)}\deltafun_{x^j},\qquad y(0)= y_0,
\end{align}
with actuators given by delta distributions~$\deltafun_{x^{M,j}}$, and with
\begin{equation}\label{sys-y-parab-abst-Arc}
A=-\nu\Delta+\Id\quad\mbox{and}\quad
A_{\rm rc}\coloneqq (a-1)\Id+b\cdot\nabla
\end{equation}
\end{subequations}
under either Dirichlet or Neumann boundary conditions~$
\fkB\in\{\Id,\bfn\cdot\nabla\}$,
where~$\bfn=\bfn(\overline x)$ stands for the unit outward normal vector at a point~$\overline x\in\p\Omega$ on the boundary of~$\Omega$.

For brevity, we denote Lebesgue and Sobolev spaces of scalar functions defined in~$\Omega$, as~$L^p\coloneqq L^p(\Omega)$ and~$W^{s,p}\coloneqq W^{s,p}(\Omega)\subset L^p$, for~$s\ge0$ and~$p\ge1$.

 More precisely, we take
\begin{equation}\notag
H\coloneqq L^2,\qquad H'=H,\
\end{equation}
as the pivot Hilbert space, and define the space
\begin{align}
&V=W^{1,2},&\quad&\mbox{for Neumann bcs};\notag\\
&V=\left\{h\in\left. W^{1,2}\,\right|\; h\rest{\p\Omega}=0\right\}&\quad&\mbox{for Dirichlet bcs};\notag
\end{align}
and the operator
\begin{equation}\notag
\langle Ay,w\rangle_{V',V}\coloneqq\nu(\nabla y,\nabla z)_{(L^2)^d}+( y,z)_{L^2}
\end{equation}
with domain
\begin{equation}\notag
\rmD(A)\coloneqq\{h\in H\mid Ah\in H\}=\left\{h\in\left. W^{2,2}\,\right|\; \fkB h\rest{\p\Omega}=0\right\}.
\end{equation}
We endow the spaces~$H$,~$V$ and~$\rmD(A)$ with the scalar products
\begin{equation}\notag
(y,z)_H\coloneqq(y,z)_{L^2},\qquad (y,z)_V\coloneqq\langle Ay,w\rangle_{V',V},
\qquad
(y,z)_{\rmD(A)}\coloneqq(Ay,Aw)_{L^2}.
\end{equation}

For the reaction and convection terms, we assume that
\begin{subequations}\label{assum.ab.parab}
\begin{align}
 & a\in L^\infty(\bbR_+,L^3(\Omega)) \quad\mbox{and}\quad b\in L^\infty(\Omega\times\bbR_+)^d\quad\mbox{with}\quad \nabla\cdot b\in L^\infty(\bbR_+,L^3(\Omega)),
\end{align}
and  further
\begin{align}
b\cdot\bfn=0,\quad\mbox{if}\quad\fkB=\bfn\cdot\nabla.
\end{align}
\end{subequations}

\subsection{Satisfiability of Assumptions~\ref{A:A0sp}--\ref{A:A1}}\label{sS:parabAssumFree}
It is clear that Assumptions~\ref{A:A0sp}--\ref{A:A0cdc} are satisfied. We show now that Assumption~\ref{A:A1} is a corollary of~\eqref{assum.ab.parab}.
Indeed, for arbitrary~$w\in V$ and~$y\in V$, we find that
\begin{align}
&\langle (a(t,\Bigcdot)-1)y+b(t,\Bigcdot)\cdot\nabla y,w\rangle_{V',V}=((a(t,\Bigcdot)-1)y+b(t,\Bigcdot)\cdot\nabla y,w)_{L^2}\notag\\
&\hspace{2em}\le\norm{a(t,\Bigcdot)-1}{L^3}\norm{y}{L^2}\norm{w}{L^6}-(y,\nabla \cdot(b(t,\Bigcdot)w))_{L^2}+\int_{\p\Omega}ywb\cdot\bfn\,\rmd\p\Omega\notag\\
&\hspace{2em}\le C_1\norm{a(t,\Bigcdot)-1}{L^3}\norm{y}{L^2}\norm{w}{W^{1,2}}
+\norm{b(t,\Bigcdot)}{(L^\infty)^d}\norm{y}{L^2}\norm{\nabla w}{(L^2)^d}\notag\\
&\hspace{3em}+\norm{\nabla\cdot b(t,\Bigcdot)}{L^3}\norm{y}{L^2}\norm{w}{L^6}\notag\\
&\hspace{2em}\le C_2(\norm{a(t,\Bigcdot)-1}{L^3}+\norm{b(t,\Bigcdot)}{(L^\infty)^d}+\norm{\nabla\cdot b(t,\Bigcdot)}{L^3}) \norm{y}{L^2}\norm{w}{V}\notag.
\end{align}
Therefore, we can see that Assumption~\ref{A:A1} holds with
\begin{equation}\notag
C_{\rm rc}=C_2(\norm{a-1}{L^\infty(\bbR_+,L^3)}+\norm{b(t,\Bigcdot)}{(L^\infty(\Omega\times\bbR_+))^d}+\norm{\nabla\cdot b(t,\Bigcdot)}{L^\infty(\bbR_+,L^3)}),
\end{equation}
by a continuity argument and density of~$V\subset L^2$.

\subsection{Satisfiability of Assumption~\ref{A:poincare}}\label{sS:actuators-parab}
We construct, for each~$M\in\bbN_+$, the supports~$x^{M,j}\in\Omega=\bigtimes\limits_{n=1}^d(0, l_n)$, $1\le j\le M_\sigma$, of~$M_\sigma=(d+1)M^d$ delta distributions~$\deltafun_{x^{M,j}}$.

For~$M=1$, in the case~$d=1$ we take two distinct  points in~$\Omega$; in the case~$d=2$ we take~$3$ noncolinear points in~$\Omega$; in the case~$d=3$ we take~$4$ noncoplanar  points in~$\Omega$. Let us denote those points as
\begin{subequations}\label{UM-expl-parab}
\begin{equation}\label{UM-refM=1-parab}
x^{1,j}\in\Omega,\qquad 1\le j\le d+1.
\end{equation}

For~$M>1$, we partition the rectangle~$\Omega$ into~$M^d$ rescaled copies~$\Omega^{M,k}$, $1\le k\le M^d$, of~$\Omega$.
In each copy~$\Omega^{M,k}=v^k+\frac1{M}\Omega$, $v^k\in\{v=(v_1,\dots,v_d)\in\bbR^d\mid v_n=(i-1)\frac{ l_n}{M},\; 1\le i\le M,\; 1\le n\le d\}$, we
select~$d+1$ points
\begin{equation}\label{UM-M>1-parab}
x^{M,(d+1)(k-1)+m}=v^k+\frac1{M}x^{1,m}\in\Omega^{M,k},\qquad 1\le m\le d+1.
\end{equation}
In this way, ~$\Omega^{M,k}\setminus\{x^{M,(d+1)(k-1)+m}\mid1\le m\le d+1\}=v^k+\frac1{M}\Omega\setminus\{x^{1,m}\mid1\le m\le d+1\}$.

Thus we arrive at the set of actuators, for a fixed~$M\in\bbN_+$, given by
\begin{equation}\label{UM-constr-parab}
U_M\coloneqq\{\deltafun_{x^{M,j}}\mid 1\le j\le M_\sigma\},\qquad\clU_M\coloneqq \linspan U_M.
\end{equation}
\end{subequations}

Now let us fix an arbitrary set of auxiliary functions
 \begin{subequations}\label{auxtilUM-parab}
 \begin{align}
&\widetilde U_M\coloneqq\{\Psi^{M,j}\mid 1\le j\le M_\sigma\}\subset\rmD(A),\qquad\widetilde \clU_M\coloneqq \linspan\widetilde U_M
\intertext{satisfying}
&\Psi^{M,j}(x^{M,j})=1\quad\mbox{and}\quad\Psi^{M,j}(x^{M,i})=0\quad\mbox{for}\quad i\ne j.
\end{align}
\end{subequations}
Finally, we denote the subspace
\begin{align}\label{DM-constr-parab}
\rmD_M\coloneqq\{z\in\rmD(A)\mid z(x^{M,i})=0\mbox{ for all }1\le i\le M_\sigma\}.
\end{align}

Note that, for the delta distribution actuators~$\fkd^{M,j}\coloneqq\deltafun_{x^{M,j}}$, we have the relation~$\langle\fkd^{M,j},z\rangle= z(x^{M,i})$. Therefore, to show that
 Assumption~\ref{A:poincare} holds true it remains to show that the constant, in~\eqref{Poinc_const},
 \begin{align}\notag
\xi_{M_+}=\inf_{\varTheta\in\rmD_M\setminus\{0\}}
\tfrac{\norm{\varTheta}{\rmD(A)}^2}{\norm{\varTheta}{V}^2},
\end{align}
diverges to~$+\infty$ as $M$ increases. For this purpose, we  follow a slight variation of the arguments in~\cite[Sect.~4.2]{Rod21-jnls}.

Firstly, we observe that, due to the choice of the~$x^{1,j}$, in the case~$M=1$ the function~$q\mapsto\norm{(q(x^{1,1}),\dots,q(x^{1,d+1}))}{\bbR^{d+1}}$ is a seminorm on the set of polynomials of degree less or equal to~$1$. Therefore, we can follow the steps as in~\cite[Sect.~4.2]{Rod21-jnls} to conclude that~$\norm{\nabla\nabla z}{(L^2)^{d^2}}$ is a norm equivalent to the usual~$W^{2,2}$-norm in the space~$\rmD_1$ in~\eqref{DM-constr-parab}. In particular~$\norm{\nabla\nabla z}{(L^2)^{d^2}}^2\ge C_0\norm{z}{W^{1,2}}^2$ for all~$z\in\rmD_1$.

 Next, again following~\cite[Sect.~4.2]{Rod21-jnls}, using the fact that for~$M>1$ the domain~$\Omega$ is partitioned in rescaled (congruent) copies of itself, we can conclude that
   \begin{equation}\notag
 \norm{\nabla\nabla z}{(L^2)^{d^2}}^2\ge C_1M^2\norm{z}{W^{1,2}}^2,\quad\mbox{for all}\quad z\in\rmD_M.
 \end{equation}
 Now, using also the equivalence of the norms~$\rmD(A)$ and~$W^{2,2}$ in~$\rmD(A)\subseteq W^{2,2}$ and the equivalence of the norms of~$V$ and~$W^{1,2}$ in~$V\subseteq W^{1,2}$, we obtain for an appropriate constant~$C_2>0$,
  \begin{align}\notag
\lim_{M\to+\infty}\xi_{M_+}&\ge\lim_{M\to+\infty}\inf_{\varTheta\in\rmD_M\setminus\{0\}}
C_2\tfrac{\norm{\varTheta}{W^{2,2}}^2}{\norm{\varTheta}{W^{1,2}}^2}\ge C_2\lim_{M\to+\infty}\inf_{\varTheta\in\rmD_M\setminus\{0\}}
\tfrac{\norm{\nabla\nabla\varTheta}{(L^{2})^{d^2}}^2}{\norm{\varTheta}{W^{1,2}}^2}\\
&\ge C_2C_1\lim_{M\to+\infty}M^2=+\infty.\notag
\end{align}
  Therefore, Assumption~\ref{A:poincare} holds true.

We have shown that Assumptions~\ref{A:A0sp}--\ref{A:poincare} are satisfiable. Thus the abstract result in Theorem~\ref{T:main} can be applied to obtain the following result.
\begin{corollary}%
 Let~$\mu>0$ be given. If we construct the actuators locations~$x^{M,j}$ as in Section~\ref{sS:actuators-parab} and choose the positive integer~$M$ and the constant~$\lambda>0$  large enough, then
the solution of~\eqref{sys-y-parab-intro-expl} satisfies, under Dirichlet or Neumann bcs, the estimate
\begin{equation}\notag
\norm{y(t)}{V'}\le\ex^{-\mu (t-s)}\norm{y(s)}{V'},\quad\mbox{for all   }t\ge s\ge0\mbox{ and all }y_0\in V'.
\end{equation}
\end{corollary}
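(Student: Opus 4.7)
The plan is to verify that Assumptions~\ref{A:A0sp}--\ref{A:poincare} are satisfied in the concrete setting of~\eqref{sys-y-parab-abst} with the actuator configuration constructed in~\eqref{UM-expl-parab}, and then invoke Theorem~\ref{T:main} directly. Since the corollary only asserts the exponential decay already proved in the abstract setting, no separate PDE-level estimate is needed; the work lies entirely in the assumption checks.

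First I would dispose of the easy parts. Assumptions~\ref{A:A0sp} and~\ref{A:A0cdc} are classical for $A=-\nu\Delta+\mathrm{Id}$ under either Dirichlet or Neumann conditions on the rectangle~$\Omega$: symmetry and coercivity on $V$ follow from Green's identity (using $\fkB$-compatible test functions), while $V\xhookrightarrow{\rm d,c} H$ is the Rellich--Kondrachov theorem. For Assumption~\ref{A:A1}, I would simply transcribe the duality computation already performed in Section~\ref{sS:parabAssumFree}, the only delicate point being the convection term, handled by integration by parts together with $b\cdot\bfn=0$ on~$\p\Omega$ in the Neumann case (the Dirichlet case killing the boundary integral directly), and then using the Sobolev embedding $W^{1,2}\xhookrightarrow{}L^6$ valid for $d\le 3$ together with $\nabla\cdot b\in L^\infty(\bbR_+,L^3)$.

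The main obstacle, and the one I would spend most effort on, is Assumption~\ref{A:poincare}, concretely the divergence $\xi_{M_+}\to+\infty$. I would proceed exactly as sketched in Section~\ref{sS:actuators-parab} but with explicit justification at each step. The key observation is that the base points $x^{1,j}$, $1\le j\le d+1$, are chosen to be affinely independent (non-collinear for $d=2$, non-coplanar for $d=3$), so that evaluation at these points is injective on the space of polynomials of degree $\le 1$. Consequently, on the finite-codimensional subspace $\rmD_1$, the seminorm $z\mapsto\norm{\nabla\nabla z}{(L^2)^{d^2}}$ controls $\norm{z}{W^{1,2}}$, by a standard Deny--Lions/Bramble--Hilbert argument (one shows that if $\norm{\nabla\nabla z}{}=0$ then $z$ is affine and vanishing at the $d+1$ chosen points, hence $z=0$). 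This yields the base-case inequality $\norm{\nabla\nabla z}{(L^2)^{d^2}}^2\ge C_0\norm{z}{W^{1,2}}^2$ on~$\rmD_1$.

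Next I would run the rescaling argument. Writing $\Omega$ as the disjoint union of the $M^d$ congruent copies $\Omega^{M,k}=v^k+\frac1M\Omega$ and using that any $z\in\rmD_M$ restricted to $\Omega^{M,k}$, when pulled back by the affine map $\bfx\mapsto v^k+\frac1M\bfx$, belongs to $\rmD_1$, the change of variables produces factors $M^2$ on the second-derivative norm and $M^0$ (resp.\ $M^{-2}$) on the $L^2$ (resp.\ $W^{1,2}$) norm; summing over $k$ yields $\norm{\nabla\nabla z}{(L^2)^{d^2}}^2\ge C_1M^2\norm{z}{W^{1,2}}^2$ on~$\rmD_M$. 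Combined with the equivalences $\norm{\Bigcdot}{\rmD(A)}\sim\norm{\Bigcdot}{W^{2,2}}$ on $\rmD(A)$ and $\norm{\Bigcdot}{V}\sim\norm{\Bigcdot}{W^{1,2}}$ on $V$ (standard elliptic regularity on convex polygonal/polyhedral domains with the given boundary conditions), this gives $\xi_{M_+}\ge C_2M^2\to+\infty$. With all four assumptions verified, Theorem~\ref{T:main} applies for any prescribed~$\mu>0$ provided~$M$ and~$\lambda$ are taken large enough, and the stated estimate on $\norm{y(t)}{V'}$ follows immediately.
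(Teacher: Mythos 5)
Your proposal is correct and follows essentially the same route as the paper: verify Assumptions~\ref{A:A0sp}--\ref{A:A1} by the duality computation of Section~\ref{sS:parabAssumFree}, establish Assumption~\ref{A:poincare} via the affine-independence of the base points $x^{1,j}$ (Deny--Lions type argument on the point-constrained space) followed by the rescaling over the $M^d$ congruent copies to get $\xi_{M_+}\ge C M^2\to+\infty$, and then invoke Theorem~\ref{T:main}. The only cosmetic caveat is that the pulled-back restrictions of $z\in\rmD_M$ to a cell need not satisfy the boundary conditions, so the base inequality should be stated on $\{w\in W^{2,2}\mid w(x^{1,j})=0\}$ rather than on $\rmD_1\subset\rmD(A)$ itself --- which your Deny--Lions argument in fact already delivers.
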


 \begin{remark}\label{R:ext-polygonal}
 We applied the strategy above to a rectangular (box) domain~$\Omega\subset\bbR^d$.
  The strategy can be applied to more general convex polygonal domains~$\bfP\subset\bbR^2$. Indeed, the strategy is based on the fact that a rectangular domain~$\bfR\subset\bbR^2$ can be decomposed into $M^2$ similar  re-scaled copies of itself. Now, we recall that a triangle~$\bfT\subset\bbR^2$ can also be (up to a rotation) decomposed into~$4^{M-1}$ similar re-scaled copies of itself. Thus, the arguments can be applied to triangular domains. Hence, after a triangulation, the strategy can also be applied to general convex polygonal domains~$\bfP\subset\bbR^2$. Now using these 2D polygonal domains, we can see that we can apply the strategy to cylindrical domains as~$\bfC=\bfP\times (0,L)\subset\bbR^3$.
The extension/applicability  of the strategy to more general 3D domains is not clear yet and may require extra work.
\end{remark}

\section{Discretization of the Riccati equation}\label{S:discRicc}
We briefly address the discretization of the Riccati equation in a  piecewise linear finite-elements context.
For a more general context we refer the reader to~\cite{MalqvistPerStill18} and references therein.

Let $Q\in \mathcal{L}(X,X')$, with $X$ a Hilbert space be given.
We assume that~$V=W^{1,2}(\Omega)$ is dense in~$X$ and approximate elements in~$X$ by elements in the finite-dimensional space
\begin{equation}\notag
H_N\coloneqq\linspan\{\bfh_n\mid 1\le n\le N\}
\end{equation}
where the~$\bfh_n$ are the (piecewise linear) hat functions, forming a basis for the finite-element space~$H_N$, defined by
\begin{equation}\notag
\bfh_n(p_n)=1\quad\mbox{and}\quad\bfh_n(p_m)=0,\quad\mbox{for}\quad 1\le n,m\le N,\quad n\ne m,
\end{equation}
where the~$p_n$ are the points in a given triangulation~$\clT$ of~$\overline\Omega$.
Since elements in~$X$ are approximated by elements in~$H_N$, we consider now an arbitrary pair
\begin{equation}\label{yzinHN}
(y,z)\in H_N\times H_N.
\end{equation}

Let  $[Q]\in\bbR^{N\times N}$ be the matrix with entries in the $i$th row and $j$th column given by
\begin{equation}\notag
[Q]_{ij}\coloneqq\langle Q h_j,h_i\rangle_{X',X}.
\end{equation}
Note that, $\langle Q y, z\rangle_{X',X}\eqqcolon\overline z^\top[Q] \overline y$,
where $\overline v\in\bbR^{N\times1}$ is the column vector whose entries~$\overline v_{n,1}$, for~$1\le n\le N$, are the finite-element coordinates of~$v=\bfh(\overline v)\coloneqq\sum\limits_{n=1}^N\overline v_{n,1}\bfh_n$. Next, let us denote
\begin{equation}\label{overQ}
\overline Q \coloneqq\bfM^{-1}[Q]\in\bbR^{N\times N}
\end{equation}
where~$\bfM=[\Id]$ is the mass matrix, $(y, z)_{L^2(\Omega)}=\overline z^\top\bfM\overline y$, for~$(y,z)$ as in~\eqref{yzinHN}.

For a given~$y\in H_N$, let us define the unique vector~$\overline{Qy}\in\bbR^{N\times 1}$ defined by
\begin{equation}\notag
\langle Q y, z\rangle_{X',X}\eqqcolon \overline z^\top\bfM \overline{Qy}\quad\mbox{for all}\quad
 z\in H_N.
\end{equation}
Then, we can write~$\langle Q y, z\rangle_{X',X}=\overline z^\top[Q] \overline y=\overline z^\top\bfM\overline Q \overline y$, which gives us
\begin{align}\label{overQsplit}
\overline{Qy}=\overline Q \overline y,\quad\mbox{for all}\quad \overline y=\in\bbR^{N\times 1}\quad\mbox{with}\quad y=\bfh(\overline y)\in H_N.
\end{align}
Further, from~$\langle Q^*  y, z\rangle_{X',X}=\langle Q z, y\rangle_{X',X}
=  \overline y^\top [Q] \overline z=\overline z^\top[Q]^\top\overline y$, we find
\begin{equation}\label{Q*}
[Q]^\top=[Q^*].
\end{equation}

Now, for a given product $L_1L_2\in\clL(X,X')$ with~$L_2\in\clL(X,X_1')$  and~$L_1\in\clL(X_1',X')$, for another Hilbert spaces~$X_1$, and with dense inclusions~$V\subset X_1$ and~$V\subset X_1'$, we write
\begin{align}
\overline z^\top [L_1L_2] \overline y&=\langle L_1L_2y,z\rangle_{X',X}=\langle L_2y,L_1^*z\rangle_{X_1',X_1}\notag\\
&=\overline {L_1^*{z}}^\top \bfM \overline{L_2} \overline y=\overline{z}^\top\overline {L_1^*}^\top \bfM \overline{L_2} \overline y
=\overline{z}^\top(\bfM^{-1}[L_1^*])^\top\bfM \overline{L_2} \overline y\notag\\
&=\overline{z}^\top[L_1^*]^\top \overline{L_2} \overline y=\overline{z}^\top\bfM\overline{L_1}\, \overline{L_2} \overline y\notag
\end{align}
which gives us
\begin{align}\label{L1L2}
[L_1L_2]=\bfM\overline{L_1}\, \overline{L_2}\quad\mbox{and}\quad\overline{L_1L_2}=\overline{L_1}\, \overline{L_2}.
\end{align}

Taking~$X=V'$ and denoting by~$\mathbf\Pi=[\Pi^{\mu_{\rm ric}}]$ the finite-element matrix
associated with the solution of the Riccati equation~\eqref{Riccati}, we find
\begin{align}
0&=[\dot\Pi^{\mu_{\rm ric}}]+ [\Pi^{\mu_{\rm ric}} L]+([\Pi^{\mu_{\rm ric}} L])^\top -[\Pi^{\mu_{\rm ric}} B_\beta B_\beta^*\Pi^{\mu_{\rm ric}}]+[C^*C]\notag\\
&=\dot{\mathbf\Pi}+ \bfM\overline{\Pi^{\mu_{\rm ric}} L}+(\bfM\overline{\Pi^{\mu_{\rm ric}} L})^\top -\bfM\overline{\Pi^{\mu_{\rm ric}} B_\beta B_\beta^*\Pi^{\mu_{\rm ric}}}+\bfM\overline{C^*C}\notag\\
&=\dot{\mathbf\Pi}+ \mathbf\Pi\overline{L}+\overline{L}^\top\mathbf\Pi -\mathbf\Pi\overline{B_\beta B_\beta^*}\bfM^{-1}\mathbf\Pi+\bfM\overline{C^*C}\notag\\
&=\dot{\mathbf\Pi}+ \mathbf\Pi\overline{L}+\overline{L}^\top\mathbf\Pi -\mathbf\Pi\bfM^{-1}[B_\beta B_\beta^*]\bfM^{-1}\mathbf\Pi+\bfM\overline{C^*C}\notag
\end{align}
 Recalling~$B$ in~\eqref{sys-y-opt-s-dyn}, we note that
 $\langle B_\beta B_\beta^*y,z\rangle_{\rmD(A)',\rmD(A)}=(B_\beta^*y,B_\beta^* z)_{\bbR^{M_0}}$. Hence, with
\begin{align}\notag
\bfB_\beta\coloneqq\beta^{-\frac12}[b_{(n,j)}]\in\bbR^{N\times M_0},
\end{align}
where~$[b_{(n,j)}]$ is the matrix with columns containing the finite-element vectors~$b_{(:,j)}$ corresponding to the delta actuators~$\delta_{x^j}$, $1\le j\le M_0$, we obtain
 \begin{equation}\notag
\langle B_\beta B_\beta^*y,z\rangle_{\rmD(A)',\rmD(A)}=(B_\beta^*y,B_\beta^* z)_{\bbR^{M_0}} =(\bfB_\beta^\top \overline z)^\top\bfB_\beta^\top \overline y=\overline z^\top\bfB_\beta\bfB_\beta^\top \overline y.
\end{equation}
Thus, we find
$[B_\beta B_\beta^*]=\bfB_\beta\bfB_\beta^\top$.%

Next for $C=C^*=C^*C=P_{\clE_{M_1}^\rmf}$, the  orthogonal projection onto~$\clE_{M_1}^\rmf$, we recall that
\begin{equation}\notag
\overline{P_{\clE_{M_1}^\rmf}y}=[c_{(n,j)}]\mathbf\Theta\,[c_{(n,j)}]^\top\bfM\overline y=[c_{(n,j)}] \mathbf\Theta_\bfc^\top\mathbf\Theta_\bfc\,[c_{(n,j)}]^\top\bfM\overline y
\end{equation}
where the $j$th column of~$[c_{(n,j)}]$ contains the finite-element vector corresponding to the eigenfunction~$e_j$ of~$A$, $1\le j\le M_1$, and~$\mathbf\Theta_\bfc$ is the Cholesky factor of~$\mathbf\Theta\coloneqq([c_{(n,j)}]^\top\bfM [c_{(n,j)}])^{-1}$.

Therefore, we arrive at the discretization of~\eqref{Riccati} as
\begin{subequations}\label{Riccati-Disc}
\begin{align}
&0=\dot{\mathbf\Pi}+ \mathbf\Pi\overline{L}+\overline{L}^\top\mathbf\Pi -\mathbf\Pi\fkB_\beta \fkB_\beta^\top\mathbf\Pi+\fkC^\top\fkC
\intertext{with}
&\overline{L}=\bfM^{-1}[L^{\mu_{\rm ric}}],\quad\fkB_\beta=\bfM^{-1}\bfB_\beta\quad\mbox{and}\quad\fkC=\mathbf\Theta_\bfc[c_{(n,j)}]^\top\bfM,
\end{align}
\end{subequations}
where~$[L^{\mu_{\rm ric}}]$ is  the matrix corresponding to a spatial finite-dimensional discretization of the
diffusion-reaction-convection terms. That is, with~$(y,z)$ as in~\eqref{yzinHN},
$(Ly,z)_{V',V}= \overline z^\top [L^{\mu_{\rm ric}}]\overline y$. Namely, for the diffusion, reaction, and convection terms we take
\begin{align}
\langle (-\nu\Delta +\Id)y, z\rangle_{V',V}&=\overline z^\top(\nu\bfS+\bfM)\overline y,\notag\\
\langle (a-1-\mu_{\rm ric}) y, z\rangle_{V,V'}&=\overline z^\top[\tfrac12(\clD_{[\overline{ a-1-\mu_{\rm ric}}]}\bfM+\bfM\clD_{[\overline{a-1-\mu_{\rm ric}}]}\overline y,\notag\\
\langle b\cdot\nabla y, z\rangle_{X',X}&=\overline z^\top(\clD_{[\overline b_1]}\bfD_{x_1}+(\clD_{[\overline b_2]}\bfD_{x_2})\overline y,\notag
\end{align}
where~$\bfS$ is the stiffness matrix, $\clD_{[\overline v]}$ is the diagonal matrix with entries given by the vector~$\overline v$, and~$\bfD_{x_i}$ is the finite-element matrix corresponding to the discretization of the spatial partial derivative~$\frac{\p}{\p x_i}$, that is,  $\langle \frac{\p}{\p x_i}y, z\rangle_{V',V} = \overline z^\top\bfD_{x_i}\overline y$, for~$(y,z)$ as in~\eqref{yzinHN}.

\section{Numerical results}\label{S:numRes}
We consider actuators constructed as in Section~\ref{sS:actuators-parab}.
The results of the following simulations utilize a spatial discretization based on piecewise linear finite elements (hat functions) associated to an unstructured triangulation~$\clT$ of the spatial rectangular domain and
to a temporal discretization based on an implicit-explicit scheme.  An implicit Crank--Nicolson scheme is used to discretize the symmetric diffusion and reaction terms and  an explicit Adams--Bashford extrapolation is used to discretize the nonsymmetric convection term and the feedback control term.

\subsection{Explicit feedback control}\label{sS:numResExpl}
 We present the results of simulations showing the stability of system~\eqref{sys-y-parab-intro-expl} under Neumann boundary conditions,
 \begin{subequations}\label{sys-y-parab-num-expl}
\begin{align}
 &\tfrac{\p}{\p t} y +(-\nu\Delta +\Id)y+(a-1)y +b\cdot\nabla y
 =\textstyle\sum\limits_{j=1}^{M_0}K^{M,\lambda}_j(y)\deltafun_{x^{M,j}},\\
  &(\bfn\cdot\nabla y)\rest{\p\Omega}=0,\quad
y(0)=y_0,
       \end{align}
with explicit feedback control
\begin{equation}
K^{M,\lambda}=(K^{M,\lambda}_1,\dots,K^{M,\lambda}_{M_0}),\qquad K^{M,\lambda}_j(y)\coloneqq -\lambda (-\nu\Delta +\Id)^{-1}y)(x^{M,j}).
\end{equation}
 \end{subequations}

We shall use the two configurations of the actuators shown as shown in Fig.~\ref{fig:mesh0}, corresponding to the cases~$M\in\{1,2\}$, following the construction in Section~\ref{sS:actuators-parab}, by taking~$M_0=M_\sigma\in\{3,12\}$ actuators.
In the same figure we can see the (coarsest) triangulation~$\clT=\clT^0$ that we shall consider. The  locations of the actuators do not coincide with mesh points. Thus, for a fixed~$M$, we shall write each location~$x^{M,j}$ as a convex combination of the vertices $p^1(T_k)$, $p^2(T_k)$, $p^3(T_k)$, of a (closed) triangle~$T_k$ containing~$x^{M,j}$, that is,
\begin{equation}\notag
x^{M,j}=\textstyle\sum\limits_{n=1}^{3}c_np^n(T_k),\qquad \textstyle\sum\limits_{n=1}^{3}c_n=1,\quad 0\le c_n\le 1.
\end{equation}
Then we approximate the actuator~$\deltafun_{x^{M,j}}$ as
\begin{equation}\label{delta-convex}
\deltafun_{x^{M,j}}\approx\deltafun_{x^{M,j}}^{\clT}=\textstyle\sum\limits_{n=1}^{3}c_n\deltafun_{p^n(T_k)},
\end{equation}
where the superscript in~$\fkd_j^{\clT}$ underlines that the approximation depends on the triangulation~$\clT$ of the spatial domain. Note that in the case that there is more than one triangle in~$\clT$ containing~$x^{M,j}$, the approximation~$\deltafun_{x^{M,j}}^{\clT}$ does not depend on the choice of the triangle~$T_k\in\clT$ containing~$x^{M,j}$.
\begin{figure}[ht]
\centering
\subfigure[Case~$M=1$.\label{fig:mesh0M1}]
{\includegraphics[width=0.45\textwidth,height=0.31\textwidth]{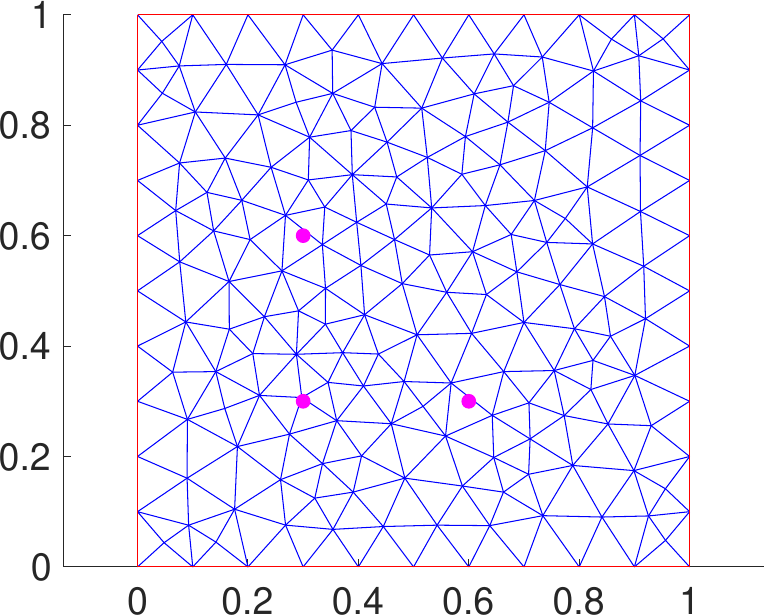}}
\quad
\subfigure[Case~$M=2$.\label{fig:mesh0M2}]
{\includegraphics[width=0.45\textwidth,height=0.31\textwidth]{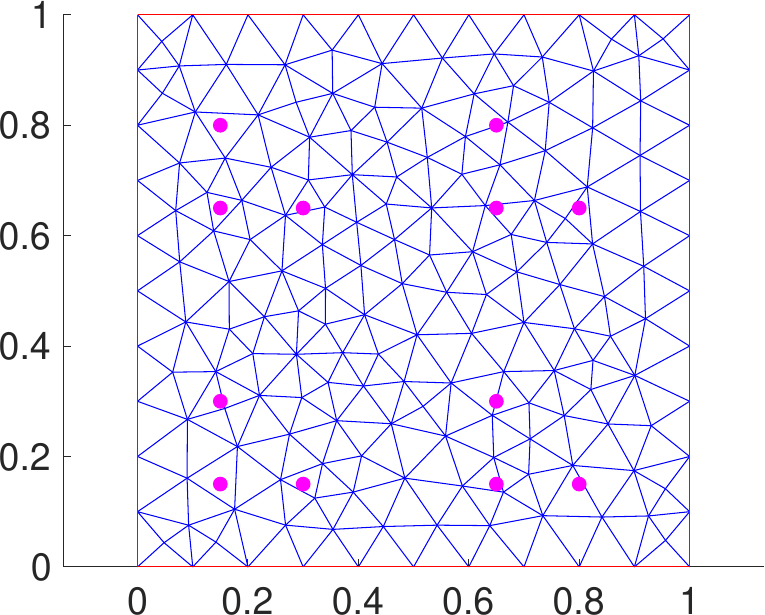}}
\caption{Initial triangulation~$\clT^0$ and actuators locations~$x^{M,j}$.}
\label{fig:mesh0}
\end{figure}

As reaction and convection coefficients in system~\eqref{sys-y-parab-num-expl} we take
\begin{subequations}\label{param.numExpl}
\begin{align}
&a(t,x)-1=-3-(2-x_1)\cos(\pi x_2)-2\norm{\sin(t+x_2)}{\bbR},\\
&b(t,x)=\begin{bmatrix}
\frac{t+2}{t+1}x_1(x_1-1)x_2\\
-\cos(t)(x_1-\frac12)x_2(x_2-1)
\end{bmatrix},
\intertext{and as diffusion coefficient and initial state we take}
&\nu=0.1,\quad\mbox{and}\quad y_0=x_1(1+\sin(2x_2)).
\end{align}
\end{subequations}

In Fig.~\ref{fig:ErrorVp_3act} we show results corresponding to~$3$ actuators located as in Fig.~\ref{fig:mesh0M1}, corresponding to the case~$M=1$, using the time-step~$t^{\rm step}=10^{-4}$ for temporal discretization.  Fig.~\ref{fig:ErrorVp_3act} shows that the free dynamics, corresponding to the case~$\lambda=0$, is unstable in the $V'$~norm. Recall that our stabilizability result has been derived in such distribution space norm. We can see that such~$3$ actuators are not able to stabilize the system for feedback gain parameters $\lambda\in\{10,50,100\}$. Further, we observe a convergence-like  behavior to an unstable regime, as~$\lambda$ increases, of the norm of the solution, which allows us to extrapolate that such actuators are unable to stabilize the system, no matter how large we take~$\lambda$. This confirms the theoretical result on the necessity of taking~$M$  large enough. We show in Fig.~\ref{fig:ErrorLinf_3act} also the $L^\infty$-norm of the solution, thus showing the behavior of the largest magnitude reached by the solution.

\begin{figure}[ht]
\centering
\subfigure[Evolution of~$V'$-norm.\label{fig:ErrorVp_3act}]
{\includegraphics[width=0.45\textwidth,height=0.31\textwidth]{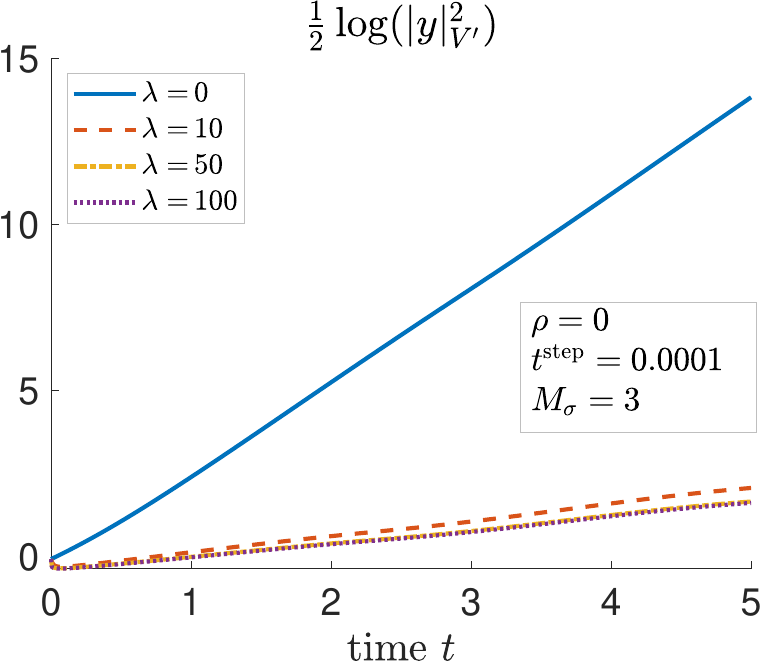}}
\quad
\subfigure[Evolution of~$L^\infty$-norm.\label{fig:ErrorLinf_3act}]
{\includegraphics[width=0.45\textwidth,height=0.31\textwidth]{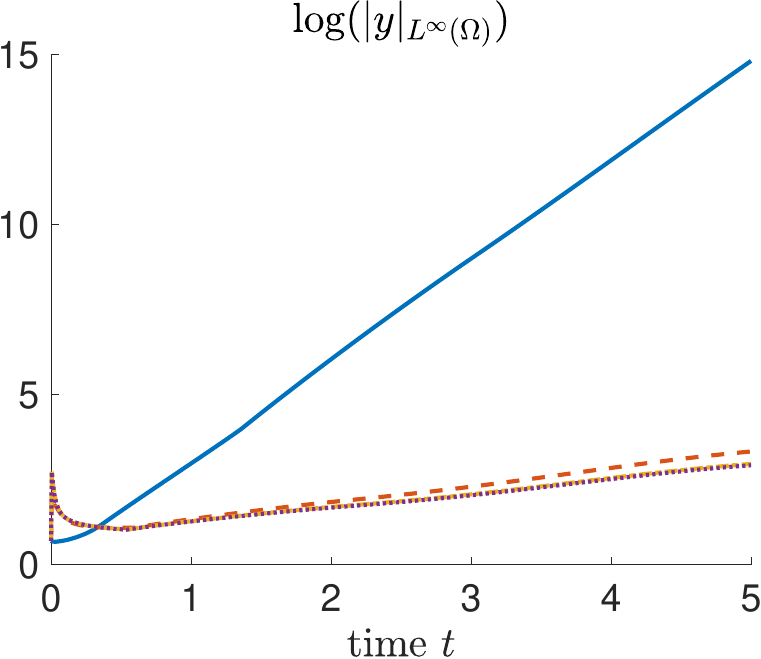}}
\caption{With explicit feedback; $M=1$. Data~\eqref{param.numExpl}.}
\label{fig:Error_3act}
\end{figure}

Next, we increase~$M$, by taking~$M=2$, corresponding to considering a~$2\times2$ partition of the spatial domain into~$4$ rescaled copies of itself and by taking~$12$ actuators located as in Fig.~\ref{fig:mesh0M2}. In addition, we shall also consider~$3$ consecutive refinements of the spatial triangulation~$\clT^0$ in Fig.~\ref{fig:mesh0M2}. Hence, we will have~$4$ triangulations as
\begin{equation}\notag
\clT^\rho,\qquad \rho\in\{0,1,2,3\},
\end{equation}
where~$\clT^{\rho+1}$ is obtained from~$\clT^\rho$ by dividing each triangle~$T^\rho_k\in\clT^\rho$ into~$4$ congruent triangles by connecting the middle points of the edges of~$T^\rho_k$ (regular refinement). The initial mesh~$\clT^0$ was generated with the Matlab routine~{\tt initmesh}, and each refimement was done using the Matlab routine~{\tt refinemesh}.

\begin{figure}[ht]
\centering
\subfigure
{\includegraphics[width=0.45\textwidth,height=0.31\textwidth]{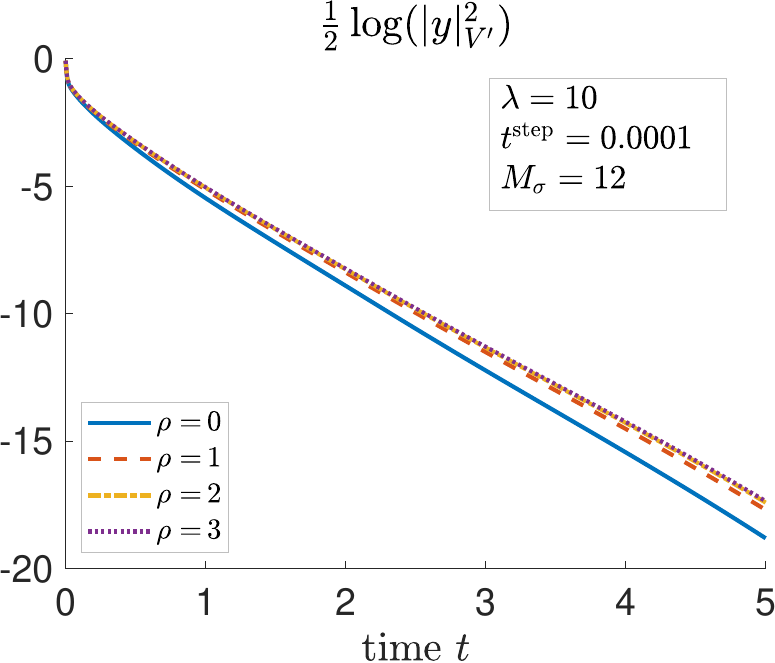}}
\quad
\subfigure
{\includegraphics[width=0.45\textwidth,height=0.31\textwidth]{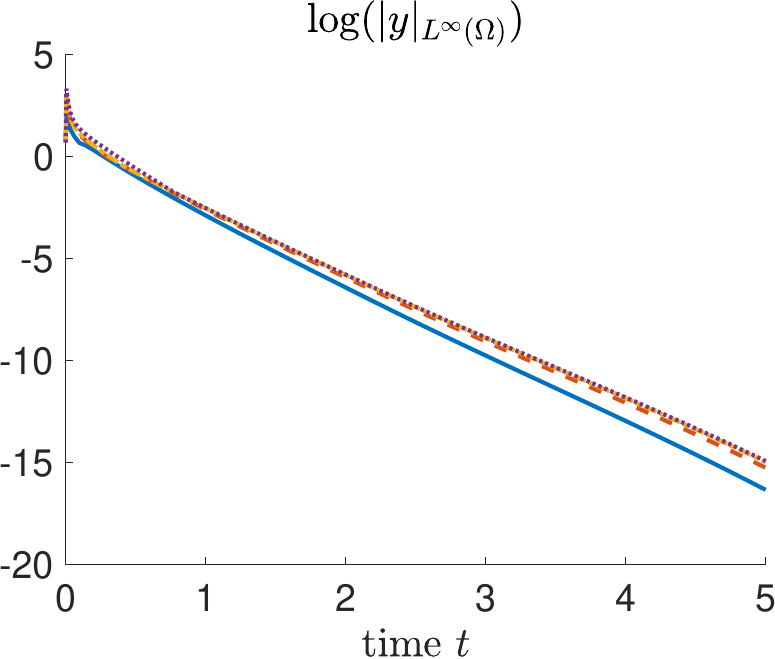}}
\caption{With explicit feedback; $M=2$. Spatial refinements. Data~\eqref{param.numExpl}.}
\label{fig:Error_12act}
\end{figure}
Fig.~\ref{fig:Error_12act} shows that delta distributions actuators located as in Fig.~\ref{fig:mesh0M2} are able to stabilize the system (for the given initial state). We also see that as we refine the spatial mesh we observe a convergence of the evolution of both the $V'$-norm and the $L^\infty$-norm to a stable behavior. With respect to the~$V'$-norm, this confirms our theoretical stability result. With respect to the~$L^\infty$-norm, we do not have such a stability result, but we can observe that this norm also converges exponentially to zero.

We show time snapshots of the solutions in Fig.~\ref{fig:timesnap-rho}, where we can see the effect of the feedback control action since the sign of the solution at actuators locations tends to be opposite to the sign of the solution at neighboring points.
\begin{figure}[ht]
\centering
\subfigure[$\rho=0$]
{\includegraphics[width=0.32\textwidth,height=0.28\textwidth]{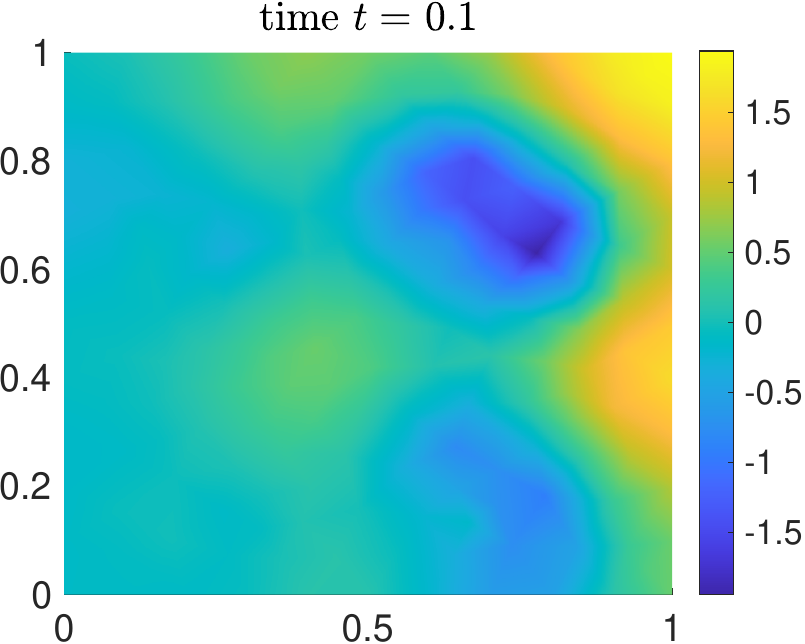}}
\subfigure[$\rho=0$]
{\includegraphics[width=0.32\textwidth,height=0.28\textwidth]{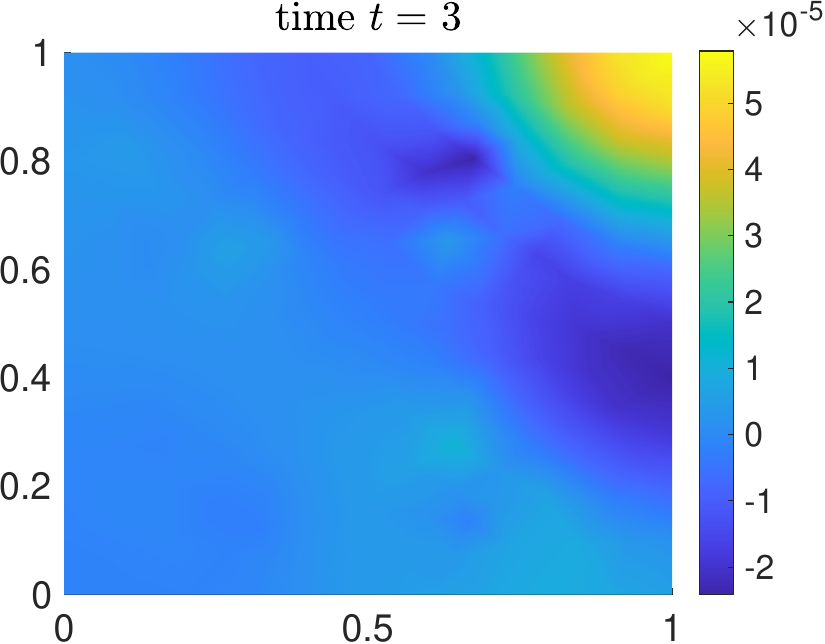}}
\subfigure[$\rho=0$]
{\includegraphics[width=0.32\textwidth,height=0.28\textwidth]{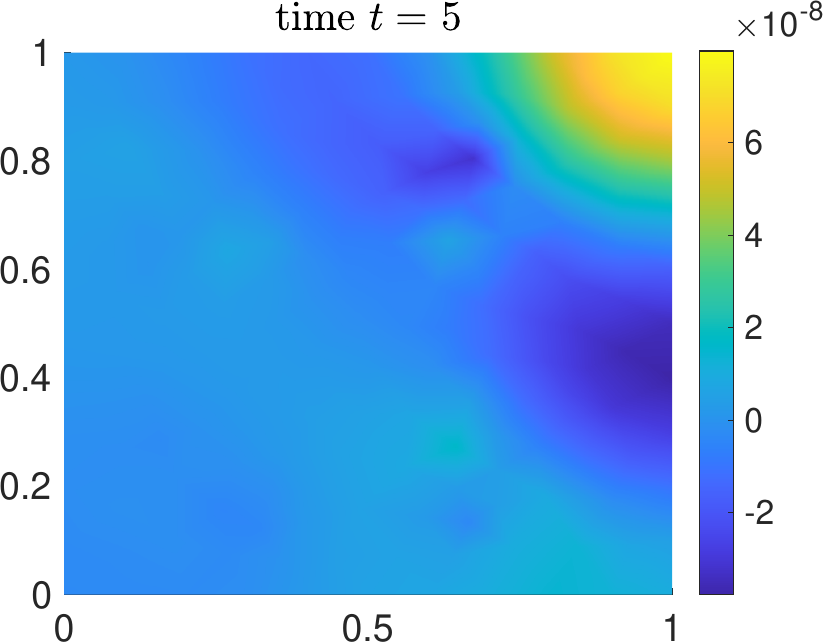}}
\\
\subfigure[$\rho=1$]
{\includegraphics[width=0.32\textwidth,height=0.28\textwidth]{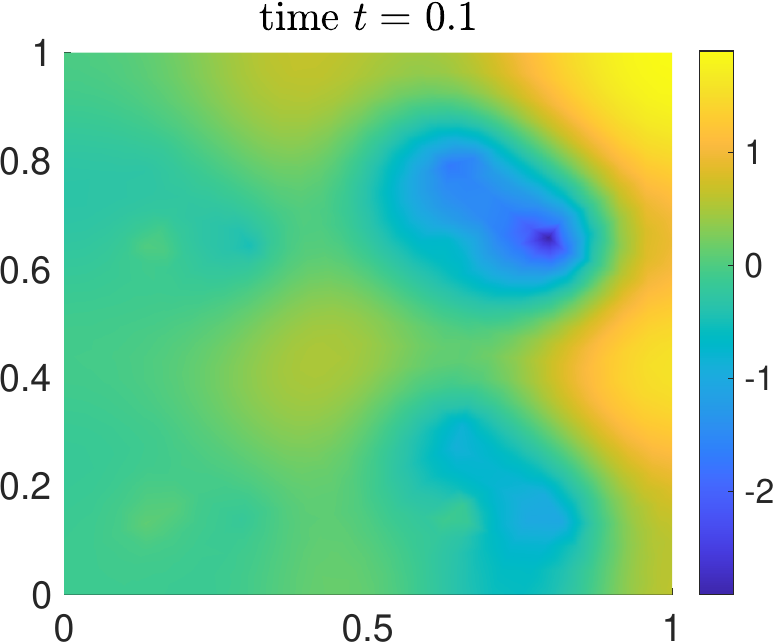}}
\subfigure[$\rho=1$]
{\includegraphics[width=0.32\textwidth,height=0.28\textwidth]{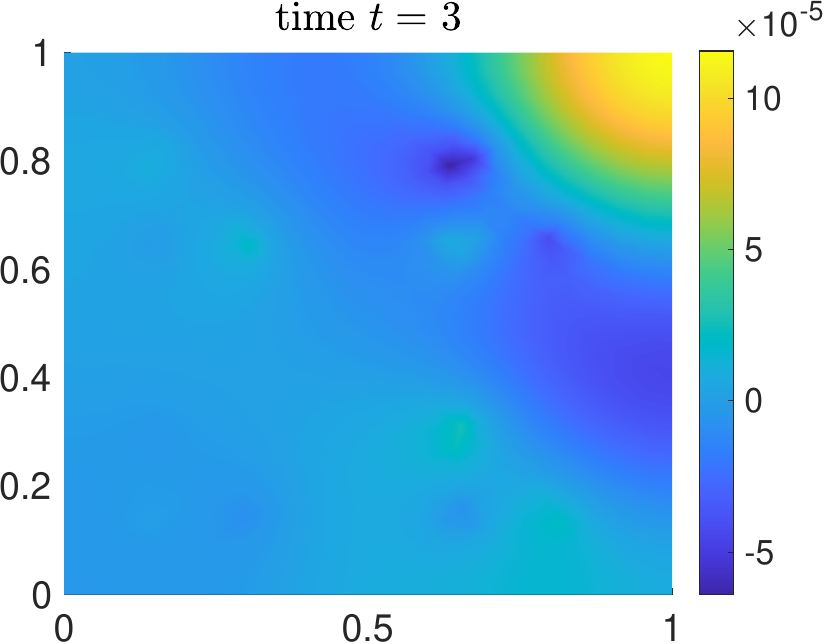}}
\subfigure[$\rho=1$]
{\includegraphics[width=0.32\textwidth,height=0.28\textwidth]{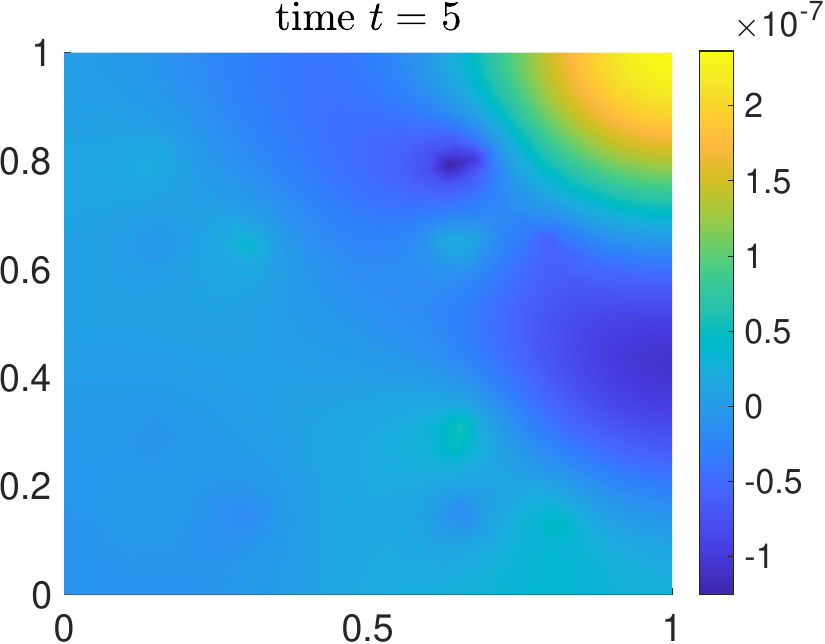}}
\\
\subfigure[$\rho=2$]
{\includegraphics[width=0.32\textwidth,height=0.28\textwidth]{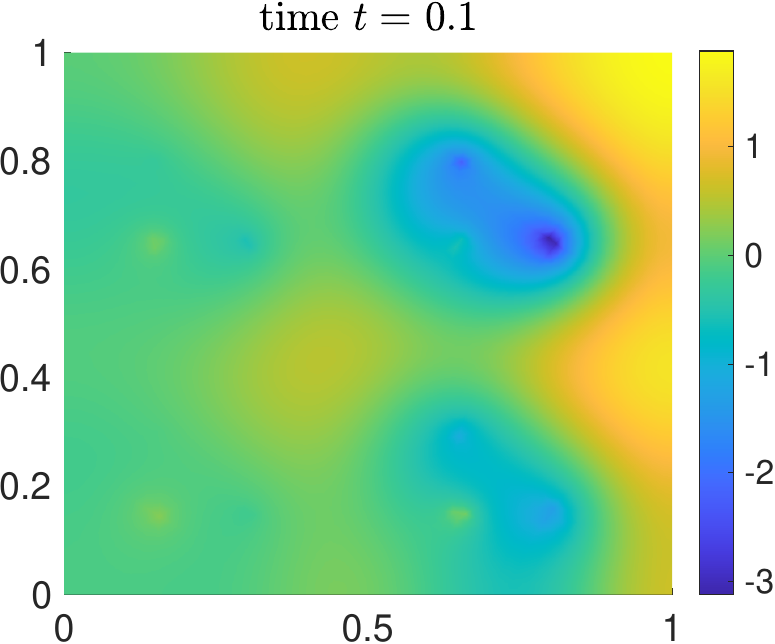}}
\subfigure[$\rho=2$]
{\includegraphics[width=0.32\textwidth,height=0.28\textwidth]{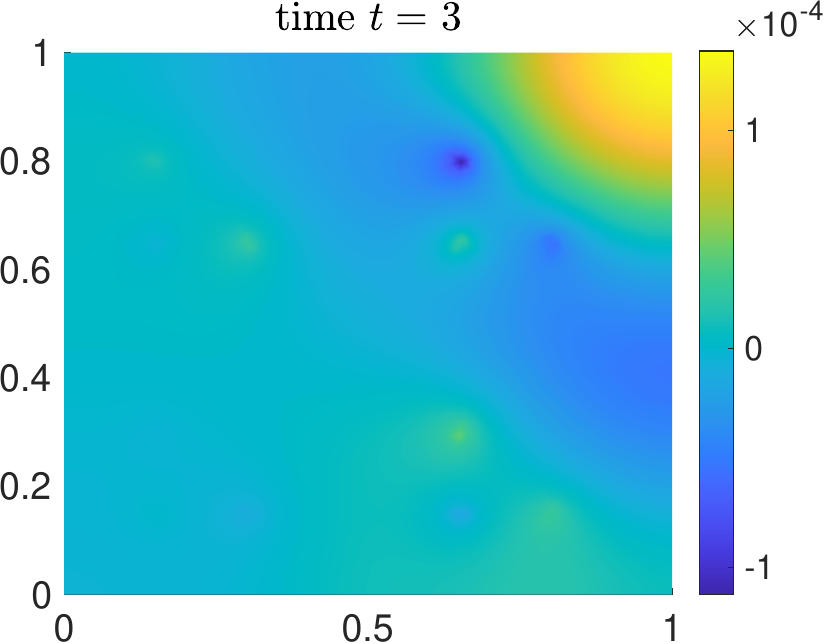}}
\subfigure[$\rho=2$]
{\includegraphics[width=0.32\textwidth,height=0.28\textwidth]{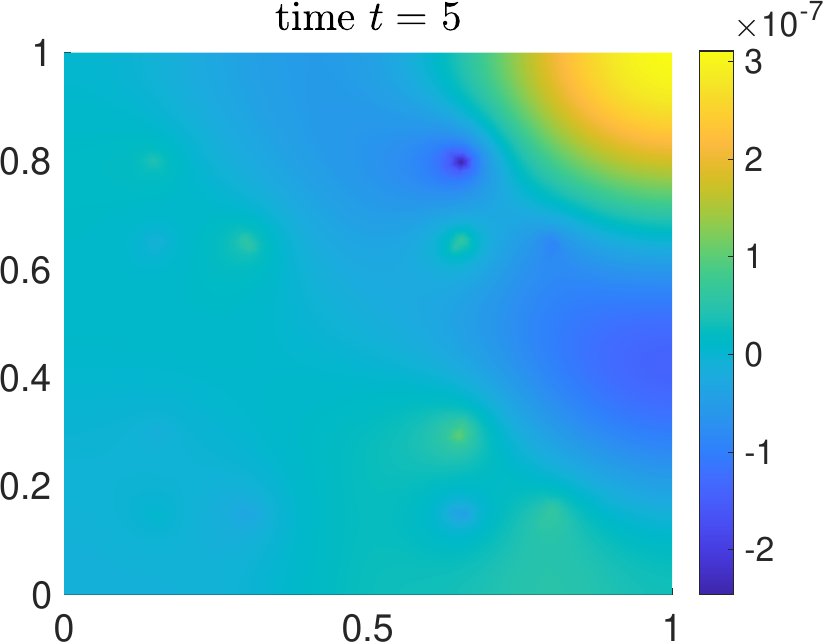}}
\\
\subfigure[$\rho=3$]
{\includegraphics[width=0.32\textwidth,height=0.28\textwidth]{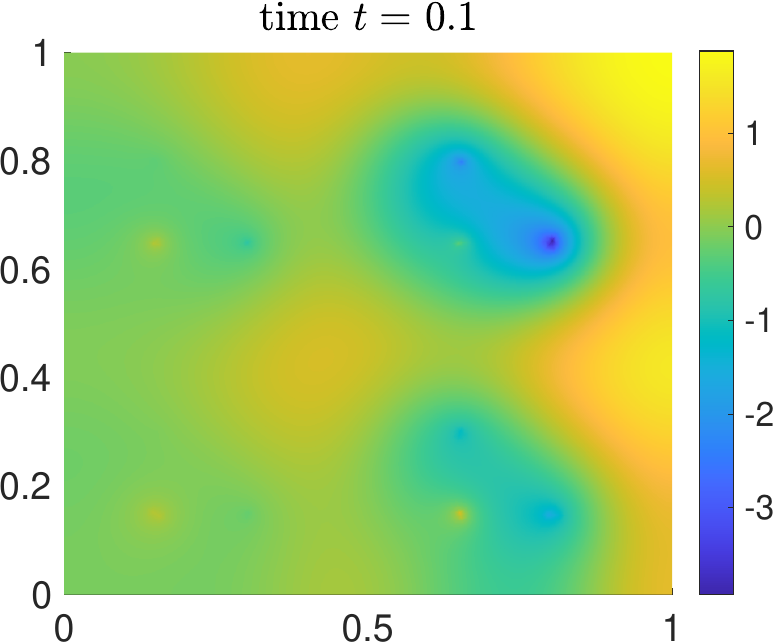}}
\subfigure[$\rho=3$]
{\includegraphics[width=0.32\textwidth,height=0.28\textwidth]{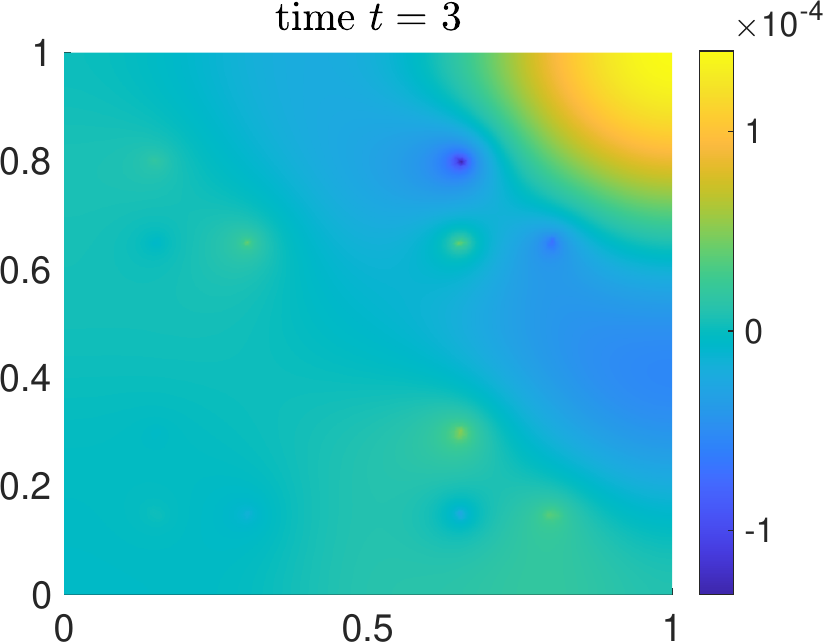}}
\subfigure[$\rho=3$]
{\includegraphics[width=0.32\textwidth,height=0.28\textwidth]{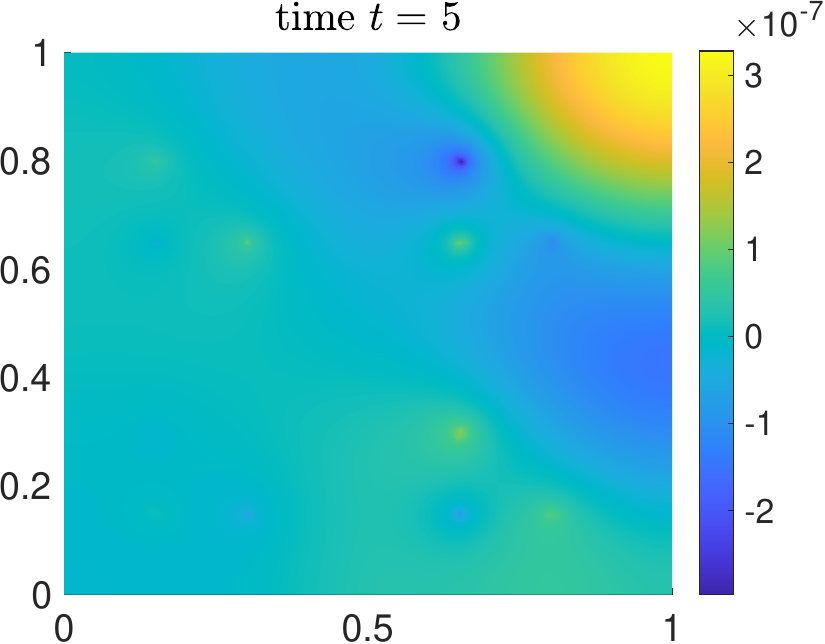}}
\caption{Time snapshots for several spatial refinements; Explicit feedback; $M=2$. Data~\eqref{param.numExpl}.}
\label{fig:timesnap-rho}
\end{figure}

\subsection{Explicit versus Riccati feedback for the autonomous case}
We have seen that the explicit feedback corresponding to~$3$ actuators located as in Fig.~\ref{fig:mesh0M1} is not able to stabilize the system. However, such~$3$ actuators could be able to stabilize the system if we take a different feedback.
We illustrate this point, in the particular autonomous setting, comparing the explicit feedback with the more computationally expensive  classical Riccati feedbacks.

First of all, solving (backwards) the differential Riccati equation for $t\in[0,+\infty)$ is not possible. Secondly, solving it in a large time interval~$t\in[0,T]$ (for a suitable final condition at time~$t=T$, e.g., see~\cite{KroRod15,PhanRod18-mcss,BreKunRod17}) can be a  very time-expensive task.

Therefore, we restrict the comparison to the autonomous case, where the solution of the Riccati equation is independent of time and we have ``just'' to solve a single algebraic Riccati equation, namely, we shall compute the solution of
\begin{subequations}\label{Riccati-Disc-alg}
\begin{align}
&0= \mathbf\Pi\overline{L_0}+\overline{L_0}^\top\mathbf\Pi -\mathbf\Pi\fkB_\beta \fkB_\beta^\top\mathbf\Pi+\fkC^\top\fkC
\intertext{with}
&\overline{L_0}=\bfM^{-1}\bfL^{\mu_{\rm ric}}(0),\quad\fkB_\beta=\bfM^{-1}\bfB_\beta\quad\mbox{and}\quad\fkC=\mathbf\Theta_\bfc[c_{(n,j)}]^\top\bfM,
\end{align}
\end{subequations}
corresponding to taking, as reaction and convection coefficients in system~\eqref{sys-y-parab-num-expl}, the evaluation at initial time~$t=0$ of the functions in~\eqref{param.numExpl}, that is,
\begin{subequations}\label{param.numExpl-aut}
\begin{equation}
a(x)-1=-3-(2-x_1)\cos(\pi x_2)-2\norm{\sin(x_2)}{\bbR},\quad
b(x)=\begin{bmatrix}
2x_1(x_1-1)x_2\\
-(x_1-\frac12)x_2(x_2-1)
\end{bmatrix}.
\end{equation}
The diffusion coefficient and initial state are taken as in~\eqref{param.numExpl}. Finally, we have taken the~$3$ actuators located as in Fig.~\ref{fig:mesh0M1} and set~$M_1=30$ for the dimension of the observation operator~$C=P_{\clE_{M_1}^\rmf}$. That is,
\begin{equation}
M_0=M_\sigma=3\quad\mbox{and}\quad M_1=30.
\end{equation}
\end{subequations}

So, the following simulations correspond to an autonomous system, with a feedback input~$u(t)=u(y(t))\in\bbR^{M_0}$ either constructed from the solution of the algebraic Riccati equation,
$u=-\beta^{-1}B^*\Pi y$, or given explicitly as in~\eqref{sys-y-parab-num-expl}.

We have computed the solution~$\mathbf\Pi$ of the Riccati equation~\eqref{Riccati-Disc} only for the coarse triangulation~$\clT^0$ in Fig.~\ref{fig:mesh0M1}, and then used such solution to construct an appropriate feedback to perform the similations in refinements of~$\clT^0$.

Let us write~$\mathbf\Pi^0=\mathbf\Pi$ to underline that~$\mathbf\Pi^0$ corresponds to the solution computed the coarse triangulation~$\clT^0$.  Following the notations in Section~\ref{S:discRicc}, we see that for the simulations in such triangulation the feedback control is given by
\begin{equation}\label{ric:co-2}
\bfB^0\bfu^0(t)\quad\mbox{with}\quad\bfu^0(t)=-\beta^{-1}\fkB^0\mathbf\Pi^0\overline y^0(t)
\end{equation}
where
$
\bfB^0=[b_{(n,j)}^0]\in\bbR^{N_0\times M_0}
$
is the matrix containing, in its columns, the vectors~$b_{(n,j)}^0$ in the coarse mesh corresponding to the actuators, $\bfM_0$ is the mass matrix in the coarse mesh, ~$\fkB^0\coloneqq(\bfM^{-1}_0\bfB^0)^\top$, and~$\overline y^0(t)\in\bbR^{N_0\times 1}$  is the vector state at time $t\ge0$.

The matrix~$\bfM^{-1}_0\bfB^0$ (and its transpose) as well as the matrix~$\overline{L_0}$ in~\eqref{Riccati-Disc-alg}  have been computed prior to the time-stepping to avoid
inverting the mass at each time step.
The computation of~$\overline{L_0}$ in fine meshes can be a demanding numerical task, as well as the computation of the solution of the algebraic Riccati equation~\eqref{Riccati-Disc-alg}.
That is why we did such computations in the coarse triangulation~$\clT^0$ only.
Next, we show how we have used the Riccati solution~$\mathbf\Pi^0$, computed in~$\clT^0$, for simulations in refinements of~$\clT^0$.

Observe that~$\bfu^0(t)$  gives us the feedback coordinates used to tune the actuators. For simulations in a refined mesh such coordinates were computed as follows.
Let $\bfB^\rho=[b_{(n,j)}^\rho]\in\bbR^{N_\rho\times M_0}$, $\rho\in\{1,2,3\}$
be the matrix containing the actuators vectors~$b_{(n,j)}^\rho$ in the mesh~$\clT^\rho$, with~$N_\rho>N_0$  points, which is a refinement~$\clT^0$. Let~$\overline y^\rho(t)\in\bbR^{N_\rho\times 1}$  be the vector state at time $t\ge0$ . We assume that the first~$N_0$ indexes correspond to points in the coarse mesh, that is,
the coordinates of~$\clM^0\overline y^\rho\in\bbR^{N_0\times 1}$, defined as~$(\clM^0\overline y^\rho)(j,1)\coloneqq\overline y^\rho(j,1)$, $1\le j\le N_0$,  correspond to the values of~$\overline y^\rho$ at the points in the coarse mesh. Then, we compute the feedback control on the finer mesh as
\begin{equation}\label{ric:co-2-fi}
\bfB^\rho\bfu^\rho(t)\quad\mbox{with}\quad\bfu^\rho(t)=-\beta^{-1}\fkB^0\mathbf\Pi^0\overline \clM_0\overline y^\rho.
\end{equation}

In Fig.~\ref{fig:Expl_vs_Ric.Expl} we see that the explicit feedback with the~$3$ delta actuators localized as in Fig.~\ref{fig:mesh0M1}, and with the control parameter gain~$\lambda=10$, is not able to stabilize the resulting autonomous system.
\begin{figure}[ht]
\centering
\subfigure
{\includegraphics[width=0.45\textwidth,height=0.31\textwidth]{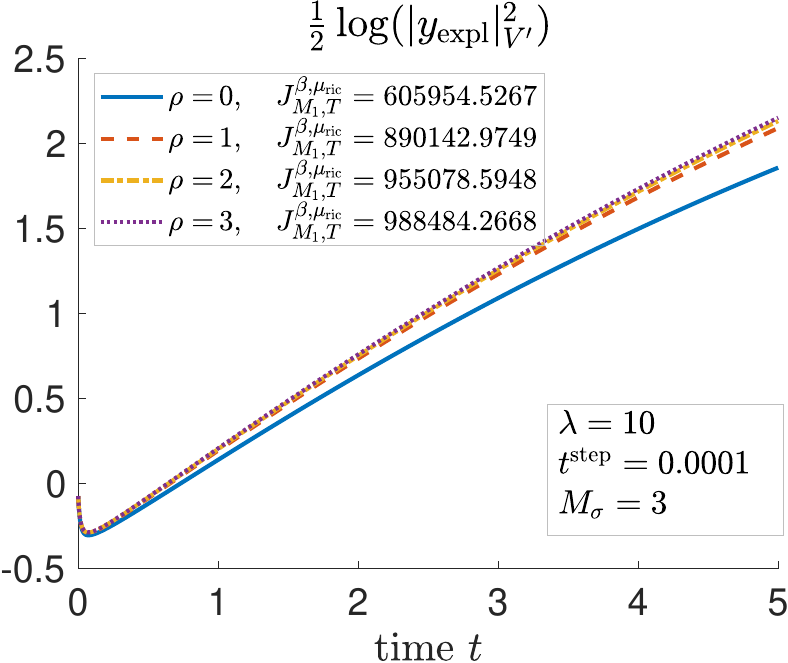}}
\quad
\subfigure
{\includegraphics[width=0.45\textwidth,height=0.31\textwidth]{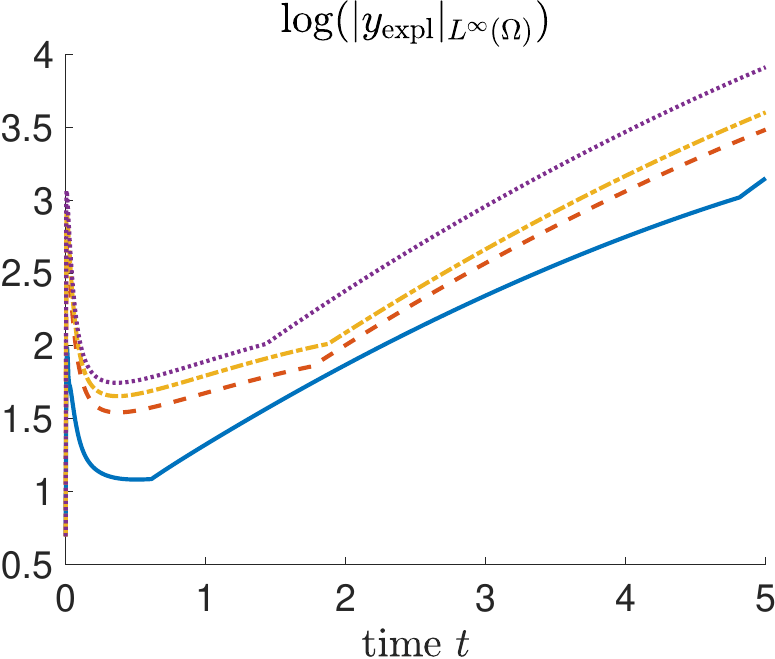}}
\caption{With explicit feedback; $M=1$. Data~\eqref{param.numExpl-aut}.}
\label{fig:Expl_vs_Ric.Expl}
\end{figure}

 The same actuators allow us to stabilize the system if we take the classical Riccati feedback control as illustrated in Fig.~\ref{fig:Expl_vs_Ric.Ricc},
\begin{figure}[ht]
\centering
\subfigure
{\includegraphics[width=0.45\textwidth,height=0.31\textwidth]{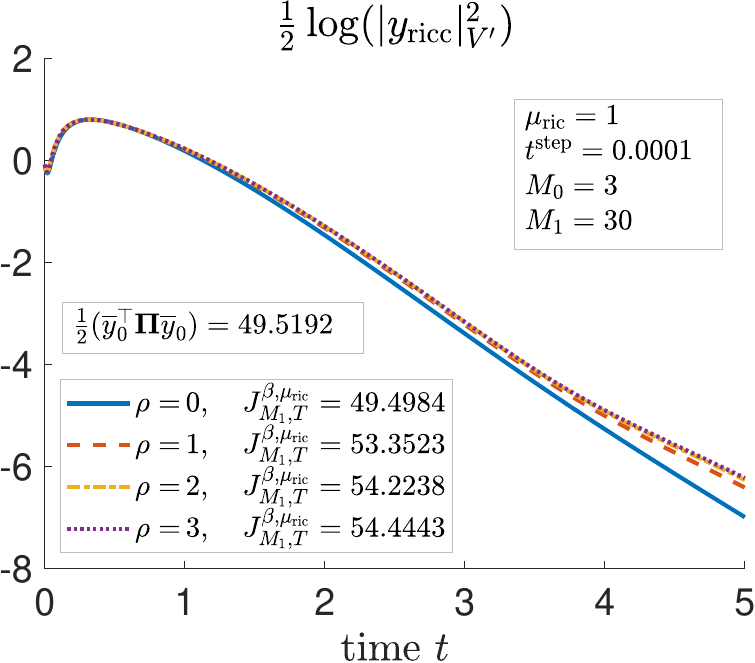}}
\quad
\subfigure
{\includegraphics[width=0.45\textwidth,height=0.31\textwidth]{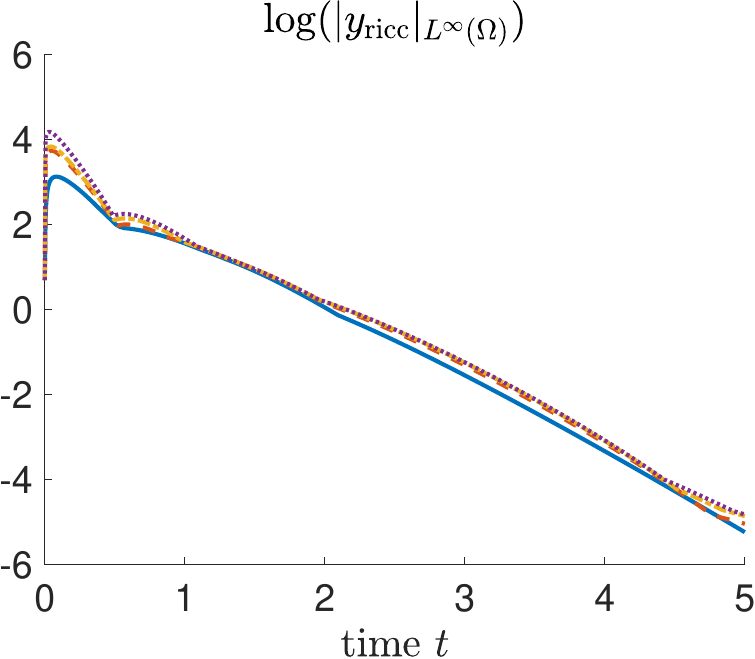}}
\caption{With Riccati feedback computed for~$\rho=0$; $M=1$. Data~\eqref{param.numExpl-aut}.}
\label{fig:Expl_vs_Ric.Ricc}
\end{figure}
 where we can also see that the feedback computed as in~\eqref{ric:co-2-fi} by using the Riccati solution in the coarse triangulation~$\clT^0$ in Fig.~\ref{fig:mesh0M1}, gives us a stabilizing control in consecutive refinements~$\clT^\rho$ of~$\clT^0$. Furthermore, we observe convergence of the norm evolution to a stable behavior as we increase the number of refinements. Hence, validating the construction we propose in~\eqref{ric:co-2-fi}. Of course, we cannot expect that such strategy/construction will work for an arbitrary initial coarse mesh, hence the simulations also show  that the coarse triangulation~$\clT^0$, in Fig.~\ref{fig:mesh0M1}, is fine enough for such strategy.

Observe also that the slope of lines in Fig.~\ref{fig:Expl_vs_Ric.Ricc} are close to (likely, a bit smaller than)~$-1=-\mu_{\rm ric}$ (for large time), where~$\mu_{\rm ric}$ is the value we have chosen in~\eqref{Riccati-Disc-alg}, hence to ``guarantee'' a stability rate (at least)~$\mu_{\rm ric}$. Note also that the evolution of the norm of the solution is not strictly decreasing. This is common for solutions~$y_{\rm ricc}$ under the action of a Riccati based feedback, where the stability holds as
\begin{equation}\notag
\norm{y_{\rm ricc}(t)}{V'}\le C\ex^{-\mu (t-s)}\norm{y_{\rm ricc}(s)}{V'},
\end{equation}
with a ``transient bound'' constant~$C$ (often) strictly larger than~$1$.

In Figs.~\ref{fig:Expl_vs_Ric.Expl} and~\ref{fig:Expl_vs_Ric.Ricc} we also see the value of the truncated cost
\begin{equation}\notag
J_{M_1,T}^{\beta,\mu_{\rm ric}}\coloneqq\tfrac12\norm{\rme^{\mu_{\rm ric}\Bigcdot}P_{\clE_{M_1}^\rmf}y}{L^2((0,T),L^2(\Omega))}^2+\beta\tfrac12\norm{\rme^{\mu_{\rm ric}\Bigcdot}u}{L^2((0,T),\bbR^{M_0})}^2,
\end{equation}
where~$T=5$ is the time we run the simulations up to; see~\eqref{Jmu-ori}.
Furthermore, we also show the approximated value for the optimal cost~$\frac12\langle\Pi y_0,y_0\rangle_{V,V'}$ given by~$\frac12(\overline y_0^\top\mathbf\Pi\overline  y_0)$.

Next, for the sake of completeness we computed the solution of the Riccati equation for the mesh after one refinement, $\rho=1$, and  used it for simulations on the refinements corresponding to~$\rho\in\{2,3\}$. The results are shown in Fig.~\ref{fig:Expl_vs_Ric1.Ricc}. We see that we obtain the same qualitative behavior as in Fig.~\ref{fig:Expl_vs_Ric.Ricc} where we used the Riccati solution computed in the coarser mesh corresponding to~$\rho=0$. Thus we conclude again that the coarser mesh is likely fine enough to capture the behavior of the norm of the state solving the optimal control problem. It is also interesting to observe that the corresponding truncated costs~$J_{M_1,T}^{\beta,\mu_{\rm ric}}$ in Figs.~\ref{fig:Expl_vs_Ric1.Ricc} and~\ref{fig:Expl_vs_Ric.Ricc} are close to each other.
\begin{figure}[ht]
\centering
\subfigure
{\includegraphics[width=0.45\textwidth,height=0.31\textwidth]{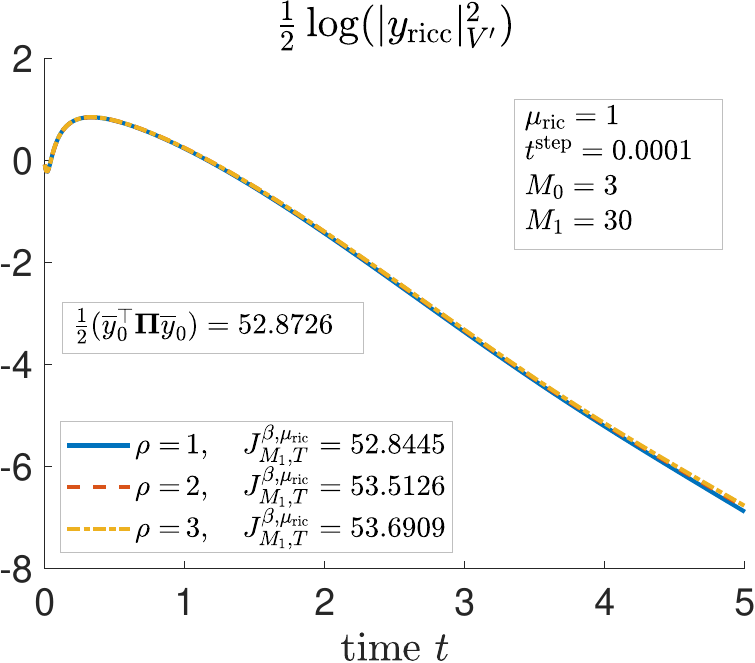}}
\quad
\subfigure
{\includegraphics[width=0.45\textwidth,height=0.31\textwidth]{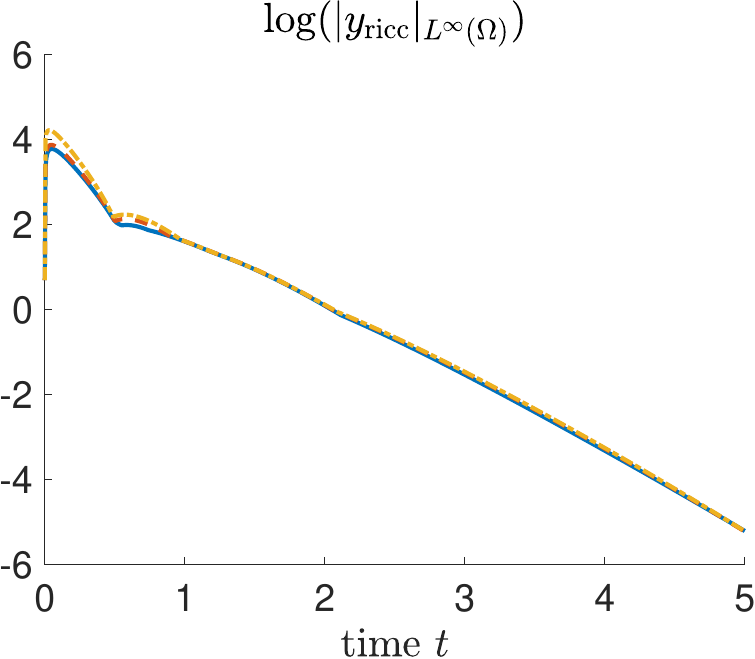}}
\caption{With Riccati feedback computed for~$\rho=1$; $M=1$. Data~\eqref{param.numExpl-aut}.}
\label{fig:Expl_vs_Ric1.Ricc}
\end{figure}

\subsection{Further remarks}
We discuss points related to the Riccati and explicit feedback.

 \subsubsection{On the algebraic matrix equations}\label{sS:uniqe-mat-ARE}
Since~$\fkC$ is positive semi-definite then~$\mathbf\Pi$ is the unique exponentially stabilizing symmetric positive semidefinite solution of~\eqref{Riccati-Disc-alg}. Indeed, if~$\mathbf\Pi_1$ is another stabilizing symmetric positive semidefinite solution, with~$\bfD\coloneqq\mathbf\Pi-\mathbf\Pi_1$, we find
\begin{align}
0&=\bfD\overline{L_0}+\overline{L_0}^\top \bfD -\mathbf\Pi\fkB_\beta \fkB_\beta^\top\mathbf\Pi+\mathbf\Pi_1\fkB_\beta \fkB_\beta^\top\mathbf\Pi_1\notag\\
&=\bfD\overline{L_0}+\overline{L_0}^\top \bfD -\bfD\fkB_\beta \fkB_\beta^\top\mathbf\Pi-\mathbf\Pi_1\fkB_\beta \fkB_\beta^\top\bfD\notag\\
&=\bfD X+X_1^\top \bfD,\notag
\quad\mbox{with}\quad
X\coloneqq\overline{L_0}-\fkB_\beta \fkB_\beta^\top\mathbf\Pi\quad\mbox{and}\quad X_1\coloneqq\overline{L_0}-\fkB_\beta \fkB_\beta^\top\mathbf\Pi_1.\notag
\end{align}
From the stability of~$X$ and~$X_1$ we have that all the eigenvalues of~$X$ and of~$X_1$ have negative real part
which implies
 that~$\bfD=0$; see~\cite[Thm.~1]{Chu87} \cite[Thm.~1]{Penzl98}.

 \subsubsection{On the choice of~$M_1$}\label{sS:rank-fact}
 The solver we used looks for low-rank (when compared to the number~$N$ of mesh points) approximations~$\mathbf\Pi_\bff$ of the Cholesky factor~$\mathbf\Pi_\bfc$ of the Riccati solution, more precisely, so that~$\mathbf\Pi\approx\mathbf\Pi_\bff^\top\mathbf\Pi_\bff$. Numerical evidence shows that (in the autonomous case ) the existence of such low-rank factors can be expected when the ranks of~$\bfB\in\bbR^{N\times M_0}$ and~$\bfC\in\bbR^{M_1\times N}$, respectively,~$M_0$ and~$M_1$, are small  (again, when compared to~$N$). We have chosen~$M_1=30$ in our simulations for which we obtained a full-rank factor~$\mathbf\Pi_\bff\in\bbR^{N\times N}$, with~$N=185$ for the case~$\rho=0$, this also gives us a positive definite approximation~$\mathbf\Pi_\bff^\top\mathbf\Pi_\bff$ for the solution~$\mathbf\Pi$. For the refined mesh we obtained a rectangular factor~$\mathbf\Pi_\bff\in\bbR^{265\times697}$. This of course does not give us a positive definite approximation~$\widetilde{\mathbf\Pi}\coloneqq\mathbf\Pi_\bff^\top\mathbf\Pi_\bff$, but it gives us (or, can still give us) a good approximation for~$\mathbf\Pi$ and for
the product control input operator~$\bfK= -\beta^{-1}(\bfM^{-1}\bfB)\widetilde{\mathbf\Pi}$ in~\eqref{ric:co-2}.
 An explanation for this phenomenon of existence of  such low-rank factor~$\mathbf\Pi_\bff$ approximations is as follows. Note that~$\Pi$ associated to the cost functional~\eqref{clJ} does not necessarily define a scalar product, because the optimal cost can vanish for suitable initial conditions. For example, for the autonomous dynamics ($A$ is independent of time)
 \begin{equation}\notag
 \dot y=-Ay+Bu,\qquad y(s)=y_s,
 \end{equation}
we can see that, if $j>M_1$ and~$y_s=e_j$, then the minimum of~$\clJ$ in~\eqref{clJ} vanishes (by taking~$u=0$). Since the dynamics is autonomous, we know that~$\Pi$ is independent of time and satisfies~$\langle\Pi e_j,e_j\rangle_{V,V'}=0$. Hence, it does not define a scalar product. In this situation,   we can expect that a numerical approximation~$\mathbf\Pi\in\bbR^{N\times N}$ of~$\Pi$ can be itself well approximated by products as~$\widetilde{\mathbf\Pi}={\widetilde{\mathbf\Pi}_\bff}^\top\widetilde{\mathbf\Pi}_\bff$ of low-rank  factors~$\widetilde{\mathbf\Pi}_\bff\in\bbR^{n\times N}$ (i.e., with $n$ small when compared~$N$); see also discussions in~\cite[Sect.~4]{BennerLiPenzl08} and~\cite{LiWhite02}.

\subsubsection{On the computation of the Riccati feedback in fine discretizations}
Recalling~\eqref{Riccati-Disc}, we need the inverse of the mass matrix to construct~$\overline{L}=\bfM^{-1}\bfL^{\mu_{\rm ric}}$ and~$\fkB_\beta=\bfM^{-1}\bfB_\beta$. The latter is not problematic if the number of actuators~$M_0$ is small, the former is also of no concern for coarse discretizations, but it can be an issue (at least, very time expensive) when working on fine discretizations, where the number of columns of~$\bfL^{\mu_{\rm ric}}$ is large.
Circumventing this issue, we computed the finite-element Riccati solution on a coarse triangulation of the spatial domain, and then used this solution,  in the form~\eqref{Riccati-Disc}, for simulations on refinements of such a triangulation. Of course, the size (i.e., number of points) of such a coarse mesh is expected to depend on the parameters of the free dynamics (e.g., on how unstable the dynamics is). This means that for parameters requiring a large size coarse mesh we may need a different strategy.
At this point, we must say that in order to avoid the inverse of the mass matrix in the Riccati equation we could think of computing first
the product~$\mathbf\Xi=\bfM^{-1}\mathbf\Pi\bfM^{-1}$, and then recover~$\mathbf\Pi=\bfM\mathbf\Xi\bfM$. Note that from~\eqref{Riccati-Disc} we find that~$\mathbf\Xi$ solves the more general equation as follows
\begin{equation}\notag
0\approx\bfM\dot{\mathbf\Xi}\bfM+\bfM\mathbf\Xi\bfL^{\mu_ric}+(\bfL^{\mu_ric})^\top \mathbf\Xi\bfM
-\bfM\mathbf\Xi\bfB_\beta \bfB_\beta^\top \mathbf\Xi\bfM
+\bfC^\top\bfC
\end{equation}
(cf.~\cite[eq.~(15)]{MalqvistPerStill18}, \cite[below eq.~(8)]{BreitenDolgovStoll-na20}, \cite[eq.~(4.8)]{Heiland16}). Thus, we can write the equation avoiding the inverse of the mass matrix,  but we have to compute the solution of a more general equation, which is numerically more involved. We refer the interested reader to the works mentioned above and references therein.

\subsubsection{Explicit versus Riccati}
We conclude this section with comments on a comparison between explicit and Riccati  feedbacks. Some advantages of the Riccati feedback are:
\begin{itemize}
\item for a given number of actuators,  it may succeed to stabilize the system when the explicit feedback fails;
\item it gives us the solution minimizing a classical energy functional;
\end{itemize}
while some advantages of the explicit feedback are:
\begin{itemize}
\item it is less expensive to compute than Riccati;
\item its computation does not require knowledge of the reaction-convection operator;
\item its computation cost is essentially the same for autonomous and nonautonomous systems, while for general nonautonomous systems, it is  impossible to solve the Riccati equation  in the entire time-line $t\in[0,+\infty)$.
\end{itemize}

For the particular case of nonautonomous time-periodic dynamics, the solution of the Riccati equation will be time-periodic. In this  case it is possible to solve such equation, by looking for a solution in a finite time interval with length equal to the time-period. Computing the solution is still an expensive numerical task, but if we succeed, then the resulting feedback could (or, will likely) stabilize the system when the explicit one fails.

\section{Final remarks}\label{S:finalremks}
We have shown that stabilizability in the distribution space norm~$V'=\rmD(A^\frac12)'$ can be achieved for parabolic-like equations, with distribution actuators in~$\rmD(A)'$, where~$A$ is a diffusion-like operator in a Hilbert pivot space~$H$, with domain~$\rmD(A)\subset H$.

As an application we considered the case of scalar parabolic equations in a spatial domain~$\Omega\subset\bbR^d$, with~$d\in\{1,2,3\}$, where the actuators are delta distributions~$\deltafun_{x^i}\in\rmD(A)'$ of a given finite set of spatial points~$x^i\in\Omega\subset\bbR^d$. For this particular setting, in the case~$d=1$, we will actually have that~$\deltafun_{x^i}\in V'$, and the stabilizability can be achieved in the norm of the Hilbert pivot function space~$H=L^2(\Omega)$.
We have seen that the numerical simulations, for domains~$\Omega\subset\bbR^2$ confirm the stability in the distribution space norm~$V'$. Besides, the same simulations suggest that the stability also holds in the function space~$L^\infty(\Omega)$-norm. Of course, the simulations correspond to a set of selected parameters and, by this reason we cannot infer the validity of such stability in~$L^\infty(\Omega)$-norm, in general. It would be interesting to know whether or not the stability holds in some function space norm, in the cases~$d\in\{2,3\}$.

\bigskip\noindent
{\bf Aknowlegments.}
S. Rodrigues acknowledges partial support from the State of Upper Austria and
the Austrian Science
Fund (FWF): P 33432-NBL.

\bibliography{DeltaStab0}
\bibliographystyle{plainurl}

\end{document}